\theoremstyle{plain}
\newtheorem{theorem}{Theorem}[section]
\newtheorem{lemma}[theorem]{Lemma}
\newtheorem{corollary}[theorem]{Corollary}
\newtheorem{example}[theorem]{Example}
\newtheorem{remark}[theorem]{Remark}
\numberwithin{equation}{section}
\newcommand{\bC}{\mathbb{C}}
\newcommand{\bN}{\mathbb{N}}
\newcommand{\bR}{\mathbb{R}}
\newcommand{\bZ}{\mathbb{Z}}
\newcommand{\tcr}[1]{\textcolor{red}{#1}}
\newcommand{\suchthat}{\;|\;}
\newcommand{\p}[1]{\mathcal #1} 
\newcommand{\weight}{{\mathsf {wt}}} 
\definecolor{emphcol}{rgb}{0.0, 0.5, 2.0}
\DeclareMathOperator{\GCD}{GCD}
\DeclareMathOperator{\PF}{PF}
\DeclareMathOperator{\md}{mod}
\newcommand{\shift}{\mathsf{shift}} 
\newcommand{\rot}{\mathsf{rotate}} 
\newcommand{\sort}{\mathsf{sort}} 
\newcommand{\pf}{\mathrm{Park}} 
\begin{document}

\title[The trimmed standard permutahedron extends the parking space]{Trimming the permutahedron to extend the parking space}
\author{Matja\v z Konvalinka}
\address{Department of Mathematics, University of Ljubljana \& Institute of Mathematics, Physics and Mechanics, Ljubljana, Slovenia}
\email{\href{mailto:matjaz.konvalinka@fmf.uni-lj.si}{matjaz.konvalinka@fmf.uni-lj.si}}
\author{Robin Sulzgruber}
\address{Department of Mathematics and Statistics, York University, Toronto, Canada}
\email{\href{mailto:rsulzg@yorku.ca}{rsulzg@yorku.ca}}
\author{Vasu Tewari}
\address{Department of Mathematics, University of Pennsylvania, Philadelphia, PA 19104, USA}
\thanks{The first author acknowledges the financial support from the Slovenian Research Agency (research core funding No. P1-0294).}
\email{\href{mailto:vvtewari@math.upenn.edu}{vvtewari@math.upenn.edu}}
\subjclass[2010]{Primary  05E05, 05E10 ; Secondary 05A15, 05A19, 20C30, 05E18}
\keywords{h-positivity, lyndon word, parking function, permutahedron}

\begin{abstract}
  Berget and Rhoades asked whether the permutation representation obtained by the action of $S_{n-1}$ on parking functions of length $n-1$ can be extended to a permutation action of $S_{n}$.
  We answer this question in the affirmative. We realize our module in two different ways. The first description involves binary Lyndon words and the second involves the action of the symmetric group on the lattice points of the trimmed standard permutahedron.
  \end{abstract}

\maketitle
\section{Introduction}

In their study of an extension of the classical parking function representation, Berget and Rhoades asked \cite[Section 4]{BR14} whether the permutation action of $S_{n-1}$ on parking functions of length $n-1$ could be extended to a permutation action of $S_{n}$.
The extension  $V_{n-1}$ in \cite{BR14} is realized by considering the $\bC$-span of a distinguished set of polynomials in $n$ variables first studied by Postnikov and Shapiro \cite{Pos04}. While the aforementioned set of polynomials is $S_n$-stable, it does not form a basis for $V_{n-1}$ in general.
Berget and Rhoades work with a basis for $V_{n-1}$ \emph{that is not $S_n$-stable.}
To establish that the restriction of $V_{n-1}$ from $S_{n}$ to $S_{n-1}$ is indeed Haiman's parking function representation \cite{Hai94} (henceforth referred to as $\pf_{n-1}$), Berget and Rhoades use Gr\"obner-theoretic techniques to construct a linear subspace of $V_{n-1}$ with a $S_{n-1}$-stable monomial basis indexed by parking functions.s

\medskip

Our point of departure is a particular permutahedron in $\bR^n$ whose set of lattice points is equinumerous with the set of parking functions on length $n-1$, thereby providing a plausible candidate.
Given a tuple $\lambda=(\lambda_1\geq \cdots \geq \lambda_n)$, we define the \emph{permutahedron} $P_{\lambda}\subset \bR^n$ to be the convex hull of the $S_n$ orbit of $\lambda$.
We denote the set of lattice points $P_{\lambda}\cap \bZ^n$ by $\mathrm{Lat}(P_{\lambda})$.
For $n\geq 2$, define $\delta_n$ to be the partition $(n-2,\dots,1,0,0)$.
It is clear that $S_n$ acts on $\mathrm{Lat}(P_{\delta_n})$.
Let $\gamma_n$ denote the associated representation.
Here is our main result which answers the question posed by Berget and Rhoades.
\begin{theorem}\label{thm:main}
	We have that
	\[
	\mathrm{Res}^{S_n}_{S_{n-1}}(\gamma_n)=\pf_{n-1}.
	\]
\end{theorem}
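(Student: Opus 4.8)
The plan is to exhibit an explicit $S_{n-1}$-equivariant bijection between $\mathrm{Lat}(P_{\delta_n})$ and the set $\PF_{n-1}$ of parking functions of length $n-1$, where $S_{n-1}$ acts on $\PF_{n-1}$ by permuting coordinates in the usual way. First I would describe how $S_{n-1}$ sits inside $S_n$ in a way adapted to the geometry: since $\delta_n=(n-2,\dots,1,0,0)$ has a repeated last entry, the stabilizer of the $n$-th coordinate is a copy of $S_{n-1}$, and a lattice point $(a_1,\dots,a_n)\in\mathrm{Lat}(P_{\delta_n})$ is determined by $(a_1,\dots,a_{n-1})$ together with the constraint $a_1+\cdots+a_n=\binom{n-1}{2}$, so $a_n$ is forced. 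Thus as an $S_{n-1}$-set, $\mathrm{Lat}(P_{\delta_n})$ is identified with the set of $(n-1)$-tuples $(a_1,\dots,a_{n-1})$ of nonnegative integers that arise as the first $n-1$ coordinates of a point in the permutahedron.

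The combinatorial heart is then a description of exactly which tuples appear. I would invoke the standard characterization of lattice points of a permutahedron: $(a_1,\dots,a_n)\in\mathrm{Lat}(P_{\delta_n})$ if and only if the partition obtained by sorting the entries of $a$ into weakly decreasing order is dominated by $\delta_n=(n-2,\dots,1,0,0)$ in the dominance order, i.e.\ the sorted partial sums of $a$ are bounded above by those of $\delta_n$, with equality for the total sum. Writing this out, $\sum_{i=1}^k a_{(i)}\le (n-2)+(n-3)+\cdots+(n-1-k)$ for each $k$, where $a_{(1)}\ge a_{(2)}\ge\cdots$ is the decreasing rearrangement. I would then translate this, via the complementation $b_i=(n-2)-a_i$ or an order-reversing reindexing, into precisely the defining inequality for parking functions: a sequence $(c_1,\dots,c_{n-1})$ of nonnegative integers is a parking function (in the ``$0$-shifted'' convention) exactly when its weakly increasing rearrangement $c_{(1)}\le\cdots\le c_{(n-1)}$ satisfies $c_{(i)}\le i-1$. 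Matching these two conditions term by term is the key computation, and because the bijection is coordinate-wise it is automatically $S_{n-1}$-equivariant; Theorem~\ref{thm:main} follows since both sides are then permutation representations on the same $S_{n-1}$-set.

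I would carry out the steps in the following order: (1) state and justify (or cite) the dominance-order criterion for membership in $\mathrm{Lat}(P_\lambda)$; (2) use the repeated last part of $\delta_n$ to realize $S_{n-1}\le S_n$ as the coordinate stabilizer and reduce $\mathrm{Lat}(P_{\delta_n})$ to an $S_{n-1}$-set of $(n-1)$-tuples; (3) define the coordinate-wise map to $(n-1)$-tuples and prove it lands in $\PF_{n-1}$ and is a bijection by checking the rearranged inequalities are equivalent; (4) conclude equivariance and hence the equality of representations, recalling from the introduction that $\pf_{n-1}$ is the permutation representation of $S_{n-1}$ on $\PF_{n-1}$. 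A cardinality check, $|\mathrm{Lat}(P_{\delta_n})|=n^{n-2}=|\PF_{n-1}|$, serves as a sanity verification and also as the motivating observation flagged in the introduction.

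The main obstacle I anticipate is step (3): getting the inequalities to line up requires choosing the right convention (which index is complemented, whether parking functions are taken in the $\{0,\dots,n-2\}$ or $\{1,\dots,n-1\}$ normalization, and whether one sorts increasingly or decreasingly) so that the dominance bound $\sum_{i\le k} a_{(i)}\le\sum_{i\le k}(\delta_n)_i$ becomes exactly the parking-function bound after rearrangement. A secondary subtlety is the repeated zeros in $\delta_n$: one must check that the equality constraint on the full sum, together with the inequality constraints, does not over- or under-count, and that no lattice point of $P_{\delta_n}$ has a coordinate outside $\{0,1,\dots,n-2\}$ (which follows from the $k=1$ inequality and the fact that $a_{(n)}\ge 0$). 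Once the conventions are fixed, the verification is a short and routine induction on partial sums, and nothing deeper than the dominance criterion is needed.
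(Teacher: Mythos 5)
Your steps (1) and (2) are fine: Rado's theorem does characterize $\mathrm{Lat}(P_{\delta_n})$ as the integer points whose decreasing rearrangement is dominated by $\delta_n$ with equal total sum, and since every lattice point has coordinate sum $\binom{n-1}{2}$ the projection onto the first $n-1$ coordinates is an injective, $S_{n-1}$-equivariant map. The gap is in step (3): the coordinate-wise bijection you are counting on does not exist, and the two conditions you hope to ``match term by term'' are structurally different. Membership in $\mathrm{Lat}(P_{\delta_n})$ is a \emph{partial-sum} (dominance) condition on the sorted tuple, whereas being a parking function is a \emph{componentwise} condition $c_{(i)}\le i-1$ on the sorted tuple; dominance is strictly weaker than componentwise domination. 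Concretely, for $n=4$ the point $(1,1,1,0)$ lies in $P_{(2,1,0,0)}$ because $(1,1,1)$ is dominated by $(2,1,0)$, yet it is not componentwise below it, and its projection $(1,1,1)$ is not a parking function. No relabeling $f:\{0,1,2\}\to\{0,1,2\}$ applied entrywise can repair this: the projected orbit representatives are $(1,1,1),(2,1,0),(1,0,0),(2,0,0),(1,1,0)$, and sending $(1,1,1)\mapsto(0,0,0)$ forces $f(1)=0$, then $(1,0,0)\mapsto(0,f(0),f(0))$ forces $f(0)=1$ (the only sorted parking function with a repeated nonzero entry is $(0,1,1)$), after which $(2,0,0)\mapsto(f(2),1,1)$ cannot be a parking function. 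The same applies to the complementation $b_i=(n-2)-a_i$, which already fails on $(2,1,0)\mapsto$ images other than the full orbit.

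This is why the paper does not attempt a direct bijection with parking functions. Instead it proves (Lemma~\ref{lem:unique_rep_dominated}) that the partitions dominated by $\delta_n$ form a complete, $S_n$-stable system of representatives for the shift-equivalence classes of $\widehat{\p C}_{n}$, concluding $\gamma_n\cong\widehat{\tau}_n$ (Theorem~\ref{thm:frob_permutahedron}), and then invokes the earlier result recalled in Remark~\ref{rem:restriction} that $\widehat{\p C}_{1,n}$ restricts to $\pf_{n-1}$ --- there the representatives are chosen differently (so that the first $n-1$ coordinates form a parking function), and that set of representatives is not $S_n$-stable. The passage between the two systems of representatives is exactly the non-trivial content your proposal elides; as the paper's remark on break divisors versus $q$-reduced divisors indicates, the implicit bijection between lattice points of the trimmed permutahedron and parking functions is genuinely non-coordinate-wise. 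To salvage your approach you would need either this intermediate comparison or an orbit-by-orbit argument showing the two $S_{n-1}$-sets have the same stabilizer multiset, neither of which is routine.
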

Thus $\gamma_n$ is a permutation representation that extends the parking function representation.
Furthermore,  a conjecture of the first and third author \cite[Conjecture 3.1]{KT20} may be restated as claiming that $\gamma_n$ is isomorphic to the ungraded Berget-Rhoades representation $V_{n-1}$.

Our approach is indirect and builds off of earlier work \cite{KT20} by the first and third author wherein a family of $S_n$-representations $\widehat{\PF}_{n,c}$ that restrict to $\pf_{n-1}$ is constructed.
For an appropriately chosen value of $c$, this representation is isomorphic to what we consider here.
We give an explicit $h$-positive expansion for the Frobenius characteristic of this representation in terms of binary Lyndon words satisfying a straightforward constraint.
Our proof goes via an intermediate module $\p C_{m,n}$ that we analyze in depth as well.
The representation $\gamma_n$  is obtained by identifying elements of {$\p C_{1,n}$}
up to a natural equivalence relation.
Finally, we can compute the character of $\gamma_n$ by appealing to \cite[Theorem 3.2]{KT20}.
This does not appear to be a straightforward task from the definition of $\gamma_n$.
The character values allow us to make a connection with recent work of Ardila, Schindler and Vindas-Mel\'endez \cite{ASV18}, which we state next.
\begin{corollary}\label{cor:connection_with_equivariant_volumes}
Let $\Pi_n$ denote the standard permutahedron $P_{(n-1,\dots,1,0)}$.
Given $\sigma\in S_n$ with cycle type $(\lambda_1,\dots,\lambda_{\ell})$, let $\Pi_n^{\sigma}$ denote the set of points in $\Pi_n$ that are fixed by $\sigma$. Suppose $\GCD(\lambda_1,\dots,\lambda_{\ell})=1$. Then the normalized volume of $\Pi_n^{\sigma}$ is equal to the number of lattice points in $P_{\delta_n}$ fixed by $\sigma$.
\end{corollary}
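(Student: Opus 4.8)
The plan is to compare two equivariant enumerative quantities attached to permutations $\sigma$ and show they coincide whenever $\GCD(\lambda_1,\dots,\lambda_\ell)=1$. On one side we have the number of lattice points of $P_{\delta_n}$ fixed by $\sigma$, i.e. the character value $\gamma_n(\sigma)$; on the other side the normalized volume of the fixed polytope $\Pi_n^\sigma$, which by the work of Ardila--Schindler--Vindas-Mel\'endez \cite{ASV18} is a computable function of the cycle type $\lambda=(\lambda_1,\dots,\lambda_\ell)$ of $\sigma$. The strategy is therefore not geometric but purely characterwise: reduce both quantities to explicit formulas in $\lambda$ and check equality.

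First I would invoke \cite[Theorem 3.2]{KT20}, as already advertised in the excerpt, to obtain a closed-form expression for the character value $\gamma_n(\sigma)$ in terms of the cycle type $\lambda$. (One expects something like a sum over ways of distributing the parts, or a product formula, valid precisely when the parts have trivial gcd — the gcd-$1$ hypothesis should be exactly what makes the Lyndon-word/necklace bookkeeping collapse to the clean answer.) Second, I would extract from \cite{ASV18} the corresponding formula for $\mathrm{vol}\,\Pi_n^\sigma$: Ardila, Schindler and Vindas-Mel\'endez compute the equivariant volume of the standard permutahedron, expressing the volume of the fixed polytope $\Pi_n^\sigma$ as a weighted count — essentially a version of the number of forests or of the relevant lattice-point/Ehrhart leading term — again governed by the cycle type, and again with a visible simplification under the gcd condition (geometrically, $\GCD(\lambda_i)=1$ forces the fixed subspace to meet the permutahedron in full dimension $\ell-1$, so the ``volume'' is genuinely top-dimensional and normalized consistently). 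Third, I would match the two formulas term by term; if they are literally the same expression this is immediate, and if they differ cosmetically (different indexing of the same sum, or a product versus a determinant) I would supply a short bijective or algebraic identity reconciling them.

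The main obstacle is the bookkeeping in the middle step: making sure the \emph{normalization} of volume used in \cite{ASV18} matches the lattice-point count on the nose, and that the gcd-$1$ hypothesis is used in the same way on both sides. Concretely, one must pin down which lattice $\Pi_n^\sigma$ is being measured against (the restriction of $\bZ^n$ to the fixed subspace, versus the intrinsic root lattice of the smaller permutahedron that $\Pi_n^\sigma$ is combinatorially), because an off-by-an-index factor here would break the identity; the gcd condition is presumably exactly what guarantees these two lattices agree. I would handle this by writing $\Pi_n^\sigma$ explicitly: collapsing each cycle of $\sigma$ to a single coordinate identifies $\Pi_n^\sigma$, after a dilation by the cycle lengths, with a generalized permutahedron in $\bR^\ell$ of the type whose volume \cite{ASV18} computes, and the dilation factors are $\lambda_1,\dots,\lambda_\ell$, whose product times the normalized volume in $\bR^\ell$ should reproduce the count from \cite[Theorem 3.2]{KT20}.

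Finally, since both inputs are already in the literature, the actual write-up should be short: quote the character formula, quote the equivariant-volume formula, observe they agree, and remark that the gcd hypothesis is precisely the common regime in which both sides are ``unramified.'' The only genuinely new content is the observation that these two a priori unrelated quantities — a lattice-point count on the \emph{trimmed} permutahedron $P_{\delta_n}$ and a volume on the \emph{standard} permutahedron $\Pi_n$ — are equal, which is a pleasant corollary of having computed the character of $\gamma_n$ by the indirect route through $\p C_{m,n}$ and binary Lyndon words rather than directly from the definition.
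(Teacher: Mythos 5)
Your approach is essentially the paper's: the paper obtains the fixed-lattice-point count $f(d)\,n^{\ell-2}$ (so $n^{\ell-2}$ when $\GCD(\lambda_1,\dots,\lambda_\ell)=1$) in Corollary~\ref{cor:lattice_points_fixed} from the character formula of \cite{KT20}, and then simply observes that this matches the normalized volume of $\Pi_n^{\sigma}$ given by \cite[Theorem 1.2]{ASV18}. Your plan is correct, though note the two formulas agree on the nose in the gcd-$1$ case, so none of the anticipated reconciliation of normalizations is actually needed in the write-up.
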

If $\sigma$ is the identity permutation, then Corollary~\ref{cor:connection_with_equivariant_volumes} says that normalized volume of the standard permutahedron in $\bR^n$ is equal to the number of lattice points in $P_{\delta_n}$. The former is well known to equal $n^{n-2}$ \cite{St91}.
Thus, in this specific instance, our result reduces to a special case of \cite[Corollary 11.5]{Pos09}.

\medskip

For maximum generality, we work in the setting of rational parking functions for the majority of this paper. In Section~\ref{sec:permutahedron}, we specialize to arrive at Theorem~\ref{thm:main}.

\section{The setup}\label{sec:setup}
To keep our exposition brief, we refer the reader \cite[Chapter 7]{St99} for all  notions pertaining to the combinatorics of symmetric functions which are not defined explicitly here.

\subsection{The modules \texorpdfstring{$\p C_{m,n}$}{C m,n} and \texorpdfstring{$\widehat{\p C}_{m,n}$}{CHat m,n}}
Given positive integers $m$ and $n$, set $N\coloneqq mn$ and
\begin{align*}
c_{m,n}\coloneqq \frac{(N-2)(n-1)}{2}.
\end{align*}
This given, consider the set of $N$-tuples defined as follows:
\begin{align*}
\p C_{m,n}\coloneqq \{(x_1,\dots,x_{N})\suchthat 0\leq x_i\leq n-1, \sum_{1\leq i\leq N}x_i=c_{m,n} \: (\md n)\}.
\end{align*}
Clearly, $|\p C_{m,n}|=n^{N-1}$.
Geometrically, one may interpret $C_{m,n}$ to be the set of lattice points in the cube $[0,n-1]^N$ in $\bR^n$ that lie on
{certain} translates of the hyperplane $x_1+\dots+x_N=0$.
Note that $S_N$ acts on $\p C_{m,n}$ by permuting coordinates and we denote the resulting permutation action by $\tau_{m,n}$.
We abuse notation and use $\p C_{m,n}$ to denote both the set and the resulting $S_N$-module.

\medskip

Let $\Lambda_k$ denote the set of tuples $\lambda=(\lambda_1\geq \cdots \geq \lambda_k)$ in $\bN^k$.
Here $\bN$ denotes the set of  nonnegative integers.
We refer to elements of $\Lambda_k$ as \emph{partitions}. Given $(\lambda_1,\dots,\lambda_k)\in \Lambda_k$, we refer to $\lambda_i$'s as the \emph{parts} of $\lambda$. In particular, we consider $0$ to be a part.
Given any sequence $\mathbf{x}=(x_1,\dots,x_k)\in \bN^k$, we define $\sort(\mathbf{x})$ to be the partition obtained by sorting $\mathbf{x}$ in nonincreasing order.
Let
\[
Y_{m,n}\coloneqq \Lambda_N\cap \p C_{m,n}.
\]
Clearly, elements of $Y_{m,n}$ index the orbits of $\p C_{m,n}$ under $\tau_{m,n}$.

\medskip

By drawing $\lambda \in Y_{m,n}$ as a Young diagram in French notation so that the lower left corner coincides with the origin in $\bZ^2$, we  may identify $\lambda$ with a lattice path $L_{\lambda}$  that starts at $(n,0)$, ends at $(0,N)$, and takes \emph{vertical} and \emph{horizontal} steps of unit length.
All coordinates here are Cartesian.
It will be convenient to extend $L_{\lambda}$ to an infinite path $L_{\lambda}^{\infty}$ by repeating $L_{\lambda}$.
Figure~\ref{fig:L_lambda} depicts $\lambda=(2,2,1,1,0)\in Y_{1,5}$. The shaded region represents the $5\times 5$ box where $\lambda$ is drawn, the red path depicts a fragment of $L_{\lambda}^{\infty}$, and the thickened subpath represents $L_{\lambda}$.

\begin{figure}[!ht]
\begin{tikzpicture}[scale=.25]
    \draw[gray,very thin] (-5,-1) grid (6,10);
    \draw[line width=0.15mm, black, <->] (-5,1)--(6,1);
    \draw[line width=0.15mm, black, <->] (0,-1)--(0,10);
    \draw[fill=gray,opacity=0.35] (0, 1) rectangle (5,6) {};
    \draw[fill=blue,opacity=0.45] (0, 1) rectangle (1,5) {};
    \draw[fill=blue,opacity=0.45] (1, 1) rectangle (2,3) {};


    \draw[red, line width=0.4mm] (5,1)--(2,1)--(2,3)--(1,3)--(1,5)--(0,5)--(0,6);
    \draw[red,line width=0.25mm](0,6)--(-3,6)--(-3,8)--(-4,8)--(-4,10)--(-5,10);
    \draw[red, line width=0.25mm] (5,1)--(5,0)--(6,0)--(6,-1);

  \end{tikzpicture}
  \caption{Path corresponding to $\lambda=(2,2,1,1,0)$.}
  \label{fig:L_lambda}
\end{figure}
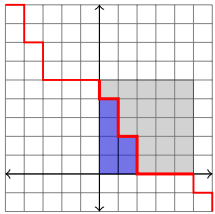


We define the \emph{shift} map $\shift$ mapping $\{0,\dots,n-1\}^N$ to itself via the rule
\begin{align*}
\shift(x_1,\dots,x_N)\coloneqq (x_1+1,\dots,x_N+1),
\end{align*}
where addition is performed modulo $n$.
It is clear that $\shift^n$ is the identity map.
This map allows us to define an equivalence relation $\sim$ on $\p C_{m,n}$ by declaring two sequences to be \emph{equivalent} if one is obtained by applying $\shift^j$ to the other  for some $j\in \bN$.
Since the sum of the coordinates remains invariant modulo $n$ upon applying the shift map, our $S_N$ action on $\p C_{m,n}$ descends to an action $\widehat{\tau}_{m,n}$ on the set of equivalences classes $\p C_{m,n}/\sim$.
We denote this set (and the associated $S_N$ module) by $\widehat{\p C}_{m,n}$.
Since every equivalence class has $n$ elements, we have that $|\widehat{\p C}_{m,n}|=n^{N-2}$.

\begin{remark}\label{rem:restriction}
\emph{
The careful reader should note that our construction works equally well with $c_{m,n}$ replaced by any integer. 
 Thus, one obtains a family of $S_N$ modules in this manner.
The analog of the set $\widehat{\p C}_{m,n}$ has the property that every equivalence class therein  has a unique element $(x_1,\dots,x_N)$ so that $(x_1,\dots,x_{N-1})$ is a rational parking function. See discussion in \cite[Section 5 ]{KT20} to this end.
It follows that the modules under consideration in this article are a special case of those studied in loc.\ cit., the choice $c_{m,n}$ having been made to answer the question of Berget and Rhoades.
This choice  is not merely a fortuitous coincidence as $c_{m,n}$ is closely related to the area statistic on parking functions, and the latter already plays a role in \cite{BR14}. 
The one pertinent upshot of this discussion is that $\widehat{\p C}_{1,n}$ restricts to $\pf_{n-1}$.
}
\end{remark}

\subsection{Binary words and \texorpdfstring{$Y_{m,n}$}{Y m,n}}
We now interpret the partitions in $Y_{m,n}$ as certain words in the alphabet $\{0,1\}$ as this will shed more light into their structure.
By reading $L_{\lambda}$ from right to left and recording a $0$ (respectively $1$) for each horizontal (respectively vertical) step, we obtain a word $w_{\lambda}$ of length $(m+1)n$ in  $\{0,1\}$.
Clearly, $w_{\lambda}$ begins with a $0$ and has $mn$ 1s and $n$ 0s.
We refer to any word in the alphabet $\{0,1\}$ with the property that the number of $1$s is $m$ times the number of $0$s as \emph{$m$-balanced}.
We denote the length of a word $w$ by  $|w|$. For the partition $\lambda=(2,2,1,1,0)\in Y_{1,5}$ depicted in Figure~\ref{fig:L_lambda}, we have that $w_{\lambda}=0001101101$.

\medskip

We now associate a \emph{weight} $\weight(w)$ with any $m$-balanced word $w=w_1\dots w_{(m+1)n}$:
\begin{align*}
\weight(w)\coloneqq \sum_{w_i=1}i.
\end{align*}
This given, define
\begin{align*}
B_{m,n}\coloneqq\{w\in \{0,1\}^{(m+1)n}\suchthat w \text{ $m$-balanced}, w_1=0, \weight(w)=-1 \: (\md n) \}.
\end{align*}
One may check that $w_{(2,2,1,1,0)}=0001101101$ belongs to  $B_{1,5}$ as its weight equals $34$, which indeed is $-1 \: (\md 5)$.

\medskip

Before establishing a couple of lemmas that emphasize the importance of $B_{m,n}$, we need some notions from the combinatorics on words \cite{Lo97}.
Given a word $w=w_1\cdots w_k$, define $\rot(w)\coloneqq w_2\cdots w_kw_1$.
Clearly, $\rot^{k}(w) = w$.
We say that $w$ is \emph{primitive} (or \emph{aperiodic}) if no proper cyclic rotation of $w$ coincides with $w$.
In other words, $w=w_1\dots w_{k}$ is primitive if $\rot^j(w)\neq w$ for $1\leq j\leq k-1$.
For instance, $0101$ is not primitive while $0011$ is.
If $w$ is not primitive, it may be written as $w=\widehat{w}^{{k}/{d}}$ for some primitive $\widehat{w}$ and $d$ a proper divisor of $k$ (that is, $d$ cannot equal $k$).
We say that two words are \emph{conjugate} if one is obtained as a cyclic rotation of the other. Observe that a conjugate of a primitive word is again primitive.

\begin{lemma}\label{lem:w_lambda_primitive}
	Every $w\in B_{m,n}$ is primitive.
\end{lemma}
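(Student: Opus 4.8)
The plan is to argue by contradiction: suppose $w\in B_{m,n}$ is not primitive, so $w=\widehat w^{k/d}$ for some primitive $\widehat w$ of length $d$, where $d$ is a proper divisor of $k=(m+1)n$. Write $q\coloneqq k/d\geq 2$ for the number of repeated blocks. The first thing I would do is compute how the weight of a repeated word decomposes over its blocks. If $\widehat w=u_1\cdots u_d$, then the $j$-th block of $w$ occupies positions $(j-1)d+1,\dots,jd$, and the contribution of its $1$'s to $\weight(w)$ is $\sum_{u_i=1}\bigl(i+(j-1)d\bigr)=\weight(\widehat w)+(j-1)d\cdot s$, where $s$ is the number of $1$'s in $\widehat w$. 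Summing over $j=1,\dots,q$ gives
\[
\weight(w)=q\,\weight(\widehat w)+d\,s\binom{q}{2}.
\]

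Next I would extract what divisibility constraints the membership conditions impose. Since $w$ is $m$-balanced of length $(m+1)n$, it has $mn$ ones, so $\widehat w$ has $s=mn/q$ ones, and since $d=(m+1)n/q$ we get $ds=m(m+1)n^2/q^2$. The key observation is that $n\mid d\,q = (m+1)n$, but more usefully, I want to show $n\mid \weight(w)$, contradicting $\weight(w)\equiv -1\pmod n$. For this it suffices to analyze the two terms above modulo $n$. The term $d\,s\binom q2$: here $ds=m(m+1)n^2/q^2$, and since $q\mid (m+1)n$ and $q\mid mn$... the cleanest route is probably to observe that $d s \binom{q}{2} = \frac{m(m+1)n^2}{q^2}\cdot\frac{q(q-1)}{2} = \frac{m(m+1)(q-1)n^2}{2q}$, and one checks $n\mid$ this expression (using that $2q\mid m(m+1)(q-1)n$ because $q\mid mn$ hence $q \mid m(m+1)(q-1)n/n \cdot n$... this needs care). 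Meanwhile $q\,\weight(\widehat w)$ need not be divisible by $n$ on its own, so the real content must be that $q\,\weight(\widehat w)\equiv \weight(w)\equiv -1\pmod n$ forces $\gcd(q,n)=1$, and then one pushes the argument further.

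Given the subtlety above, I would instead pursue the slicker argument based on the lattice-path picture. Recall $w=w_\lambda$ reads off the path $L_\lambda$, and $w_1=0$. If $w=\widehat w^{\,q}$ with $\widehat w$ of length $d=(m+1)n/q$, then $\widehat w$ is itself an $m$-balanced word beginning with $0$ — so it encodes a sub-path of $L_\lambda$ — and geometrically $L_\lambda^\infty$ is periodic with the smaller period $d$ (i.e. $\widehat w$'s translate vector), rather than with period $(m+1)n$. The vector by which one block of $\widehat w$ translates the path is $(n/q,\,mn/q)$ (it has $mn/q$ vertical and $n/q$ horizontal steps). The weight condition $\weight(w)\equiv -1\pmod n$ records (up to a fixed shift) the congruence class of the endpoint of $L_\lambda$, i.e. it pins down $\lambda$ within its shift-class; concretely $\weight(w_\lambda)$ equals a fixed constant plus $n\sum\lambda_i$... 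Actually the transparent statement is: summing $i$ over the $1$'s of a word of the form $\widehat w^q$ decomposes as above, and the condition $\weight(w)\equiv-1\pmod n$ combined with $n\mid (m+1)n = qd$ forces $q\mid$ (something) $+1$, i.e. $\gcd(q,n)\mid 1$; then a second pass with this coprimality establishes $q=1$.

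\textbf{Main obstacle.} The arithmetic bookkeeping in the paragraph around the formula $\weight(w)=q\,\weight(\widehat w)+ds\binom q2$ is the crux: one must show that the hypothesis $\weight(w)\equiv -1\pmod n$ is incompatible with $w$ being a proper power, and this is genuinely a number-theoretic statement about which divisors $q$ of $(m+1)n$ can occur, entangling the divisibility of $ds\binom q2$ by $n$ with the residue of $q\,\weight(\widehat w)$. I expect the cleanest resolution is: first deduce $\gcd(q,n)=1$ from the $m$-balanced condition forcing $q\mid mn$ and $q\mid(m+1)n$ hence $q\mid n$ would combine with coprimality of $m,m+1$... — in fact $q\mid\gcd(mn,(m+1)n)=n\gcd(m,m+1)\cdot$(correction: $\gcd(mn,(m+1)n)=n$), so $q\mid n$ automatically, and then $ds\binom q2$ is visibly divisible by $n$ since $d=(m+1)n/q$ carries a factor $n/q$ times $(m+1)$ and... the parity factor from $\binom q2$ supplies the rest — after which $\weight(w)\equiv q\,\weight(\widehat w)\equiv -1\pmod n$ with $q\mid n$ forces $q\mid -1$, hence $q=1$. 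Verifying the divisibility of $ds\binom q2$ by $n$ carefully (handling the factor of $2$) is the one place where I would slow down and check cases on the parity of $q$.
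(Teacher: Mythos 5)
Your final paragraph contains essentially the paper's own proof: decompose $\weight(w)=q\,\weight(\widehat w)+ds\binom{q}{2}$ over the $q$ repeated blocks, deduce $q\mid n$ from the balance condition (the paper phrases this as $(m+1)\mid d$), check that $ds\binom{q}{2}\equiv 0\pmod n$ using the evenness of $m(m+1)$, and conclude that $q\,\weight(\widehat w)\equiv -1\pmod n$ is impossible for $q\geq 2$ dividing $n$. The divisibility worry you flag does resolve exactly as you suspect (the factor $2$ is absorbed by $m(m+1)$), so the argument is complete; the lattice-path detour in your middle paragraph is unnecessary.
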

\begin{proof}
Towards establishing a contradiction, suppose $w$ is not primitive.
Then $w=\widehat{w}^{(m+1)n/d}$ where $\widehat{w}$ is primitive and $d$ is a proper divisor of $(m+1)n$.
We claim that $\widehat{w}$ is $m$-balanced as well.
Indeed, say $\widehat{w}$ possesses $r$ 0s.
It must be that
\begin{align}
\frac{(m+1)n}{d}r=n,
\end{align}
which implies that  $r=\frac{d}{m+1}$. Thus, the number of $1$s in $\widehat{w}$ is $\frac{md}{m+1}$, implying that $\widehat{w}$ is $m$-balanced.
Note in particular that  $(m+1)|d$.

\medskip

Suppose $\weight(\widehat{w})$ equals $M$.
Then we have that
\begin{align}
\weight(w)&=M+\left(M+\frac{md}{m+1}d\right)+\left(M+\frac{md}{m+1}2d\right)+\cdots+\left(M+\frac{md}{m+1}\left(\frac{(m+1)n}{d}-1\right)d\right)\nonumber\\
&=\frac{(m+1)n}{d}M+\frac{md^2}{m+1}\binom{\frac{(m+1)n}{d}}{2},
\end{align}
which, modulo $n$, translates to the  equality
\begin{align}\label{eqn:weight_alternative_1}
\weight(w)=\frac{(m+1)n}{d}M-\frac{mnd}{2}.
\end{align}
Since $(m+1)|d$, we know that $md$ is even.
Thus \eqref{eqn:weight_alternative_1} simplifies to
\begin{align}
\label{eqn:weight_alternative_2}
\weight(w)=\frac{(m+1)n}{d}M \: (\md n).
\end{align}

\medskip

Since $w\in B_{m,n}$, we know that $\weight(w)=-1 \: (\md n)$. This in conjunction with
\eqref{eqn:weight_alternative_2} implies that $M$ satisfies
\begin{align}\label{eqn:no_solution}
	\frac{(m+1)n}{d}M=-1 \: (\md n).
\end{align}
Writing $(m+1)n/d$ as $\frac{n}{d/(m+1)}$, and recalling that $d$ is a proper divisor of $(m+1)n$, we conclude that $\GCD(\frac{(m+1)n}{d},n)\geq 2$.
In particular, \eqref{eqn:no_solution} has no solutions, and we have established that $w$ is primitive.
\end{proof}

We are now ready to relate $B_{m,n}$ to $Y_{m,n}$.

\begin{lemma}
$B_{m,n}$ and $Y_{m,n}$ have the same cardinality.
\end{lemma}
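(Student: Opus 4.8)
The plan is to exhibit an explicit bijection between $Y_{m,n}$ and $B_{m,n}$, or failing that, to count $B_{m,n}$ directly and match it against the known cardinality $|Y_{m,n}|$. Recall that $Y_{m,n}=\Lambda_N\cap\p C_{m,n}$ indexes the $\tau_{m,n}$-orbits on $\p C_{m,n}$. There is already a natural map $\lambda\mapsto w_\lambda$ from $Y_{m,n}$ into the set of $m$-balanced binary words of length $(m+1)n$ beginning with $0$, obtained by reading the lattice path $L_\lambda$; and the running example checks out that $w_\lambda\in B_{m,n}$. So the first step is to prove that $\lambda\mapsto w_\lambda$ actually lands in $B_{m,n}$, i.e.\ that the congruence condition $\sum_i \lambda_i\equiv c_{m,n}\pmod n$ defining $\p C_{m,n}$ is equivalent, under the path encoding, to $\weight(w_\lambda)\equiv -1\pmod n$. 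This is a bookkeeping identity: if the $j$-th vertical step of $L_\lambda$ (read right to left) occurs at position $i_j$ in the word, then the number of horizontal steps before it is $i_j-j$, which is precisely the value of the $j$-th largest part $\lambda_{N+1-j}$ read off the staircase — so $\sum_j \lambda_j = \sum_j (i_j - j) = \weight(w_\lambda) - \binom{N+1}{2}$. One then checks $-\binom{N+1}{2}\equiv c_{m,n}+1\pmod n$, using $N=mn$ and the definition $c_{m,n}=(N-2)(n-1)/2$; this is a short congruence computation modulo $n$.

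The second step is to show $\lambda\mapsto w_\lambda$ is a bijection onto $B_{m,n}$. Injectivity and surjectivity onto the set of all $m$-balanced words starting with $0$ is standard (the path encoding is reversible), so the only content is that the image is exactly $B_{m,n}$, which is the content of Step 1 run in both directions. Thus $|B_{m,n}|=|Y_{m,n}|$, and we are done. I expect the whole argument to be short once the translation dictionary between parts of $\lambda$, positions of $1$s in $w_\lambda$, and the two congruence conditions is set up carefully; there is no real obstacle, only the risk of an off-by-one error in the indexing of the path (where it starts, the direction of reading, and whether position counting is $0$- or $1$-based).

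An alternative, should the bijective route feel heavy, is to count $B_{m,n}$ by a generating-function or averaging argument: the number of $m$-balanced words of length $(m+1)n$ with a prescribed weight residue modulo $n$ can be extracted as $\frac1n\sum_{\zeta^n=1}\zeta^{-r}\prod_{i=1}^{(m+1)n}(\text{contribution of position }i)$, and one shows this is independent of the residue class $r$ once one also imposes $w_1=0$; since the residues partition the $m$-balanced words with $w_1=0$ into $n$ classes of equal size, each has cardinality $\frac1n\binom{(m+1)n-1}{n-1}=\frac1n\binom{N+n-1}{n-1}$, which one then recognizes as $|Y_{m,n}|$ by the same staircase-path count. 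I would present the bijective proof as the main one, since it is cleaner and will also be reused implicitly when identifying orbits later; the hardest part, such as it is, is simply pinning down the congruence $\binom{N+1}{2}\equiv -c_{m,n}-1\pmod n$ and making sure the parity of $mn$ is handled (it may be even or odd), so that the division by $2$ in $c_{m,n}$ is interpreted correctly.
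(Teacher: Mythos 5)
Your main route is exactly the paper's: send $\lambda\mapsto w_\lambda$, check that the congruence $|\lambda|\equiv c_{m,n}\pmod n$ translates into $\weight(w_\lambda)\equiv -1\pmod n$, and observe that the path encoding is reversible. However, the translation dictionary as you state it has the sign flipped, and this is not a harmless off-by-one: if the $j$-th vertical step (reading from $(n,0)$ northwest) sits at position $i_j$, then the number of horizontal steps preceding it is $i_j-j=n-\lambda_j$, \emph{not} a part of $\lambda$. The correct identity is therefore $Nn-|\lambda|=\weight(w_\lambda)-\binom{N+1}{2}$, i.e.\ $|\lambda|=Nn+\binom{N+1}{2}-\weight(w_\lambda)$, whereas you wrote $|\lambda|=\weight(w_\lambda)-\binom{N+1}{2}$. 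The two versions differ by $2|\lambda|\equiv 2c_{m,n}\equiv 2\pmod n$, so for $n>2$ only one of them can be compatible with $\weight(w_\lambda)\equiv-1$; with your version the congruence you propose to verify, $-\binom{N+1}{2}\equiv c_{m,n}+1\pmod n$, is false (e.g.\ $m=1$, $n=5$ gives $0\equiv 2\pmod 5$), while the correct one, $\binom{N+1}{2}\equiv c_{m,n}-1\pmod n$, does hold. So the proof goes through once you fix the dictionary, which is precisely the risk you flagged.

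A separate warning about your fallback: the roots-of-unity averaging argument cannot work as described, because the weight residues of $m$-balanced words with $w_1=0$ are \emph{not} equidistributed modulo $n$. For $m=1$, $n=2$ the three words $0011,0101,0110$ have weights $7,6,5$, splitting $2$--$1$ between the two residue classes (and indeed $\tfrac12\binom{3}{1}$ is not an integer); the nonuniformity is exactly what the M\"obius sum in the paper's corollary accounts for. Stick with the bijection.
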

\begin{proof}
	We claim that the correspondence $\lambda\mapsto w_{\lambda}$ is a bijection from $Y_{m,n}$ to $B_{m,n}$.
	To this end, we first show that $w_{\lambda}\in B_{m,n}$. For convenience, set $w=w_1\dots w_{(m+1)n}\coloneqq w_{\lambda}$.
	It is immediate that $w_1=0$ as $\lambda_1\leq n-1$, and that $w$ is $m$-balanced.
	Thus we need to check that $\weight(w)=-1 \: (\md n)$.
	It is easy to see that $w_j=1$ if and only if $j=n-\lambda_{i}+i$ for some (unique) $i$.
	Thus we obtain
\begin{align}
\weight(w)=\sum_{1\leq i\leq N}(n-\lambda_{i}+i)=Nn+\binom{N+1}{2}-|\lambda|.
\end{align}
Since $N = m n$ and $|\lambda| = (N-2)(n-1)/2$, we have 
\begin{align}
\weight(w)=Nn + \binom m 2 n^2 + mn+n-1.
\end{align}
Thus we conclude that $\mathsf{wt}(w)=-1 \: (\md n)$, and therefore $w\in B_{m,n}$.

\medskip

It is clear that this correspondence is an injection from $Y_{m,n}$ to $B_{m,n}$.
That this is a bijection follows because this correspondence is easily reversible,  and one may obtain a partition for every word in $B_{m,n}$.
That this partition belongs to $Y_{m,n}$ follows by reading the earlier string of equalities backwards.
\end{proof}

We use this correspondence to obtain a `closed form' for $|Y_{m,n}|$.
\begin{corollary}
The number of orbits of $\p C_{m,n}$ under $\tau_{m,n}$ equals the number of $N$-elements subsets of $[(m+1)n-1]\coloneqq \{1,\dots,(m+1)n-1\}$ whose subset sum equals $-1 \: (\md n)$. More explicitly, we have
\begin{align*}
|Y_{m,n}|=\frac{1}{n}\sum_{d|n} (-1)^{m(n+d)}\mu(n/d)\binom{(m+1)d-1}{md}.
\end{align*}
\end{corollary}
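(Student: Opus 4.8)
The plan is to prove the two claims separately: first, a bijective reduction identifying $|Y_{m,n}|$ with a count of subset sums; second, a M\"obius/Burnside-type argument to extract the closed form.

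\textbf{Step 1: The subset-sum reinterpretation.} By the preceding lemma, $|Y_{m,n}| = |B_{m,n}|$, so it suffices to count words $w = w_1 \cdots w_{(m+1)n} \in \{0,1\}^{(m+1)n}$ with $w_1 = 0$, exactly $N = mn$ ones, and $\weight(w) = -1 \pmod n$. Such a word is determined by the set $S = \{\, i \suchthat w_i = 1 \,\}$, which is an $N$-element subset of $\{2, \dots, (m+1)n\}$ (since $w_1 = 0$ forces $1 \notin S$), and $\weight(w) = \sum_{i \in S} i$. The map $S \mapsto S - 1 \coloneqq \{\, i - 1 \suchthat i \in S\,\}$ is a bijection onto $N$-element subsets of $[(m+1)n - 1]$, and it changes the subset sum by exactly $-N = -mn \equiv 0 \pmod n$. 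Hence $|Y_{m,n}|$ equals the number of $N$-element subsets of $[(m+1)n-1]$ whose sum is $\equiv -1 \pmod n$, which is the first assertion.

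\textbf{Step 2: Extracting the closed form via roots of unity / M\"obius inversion.} Let $\zeta$ be a primitive $n$-th root of unity. The number of $N$-subsets $T \subseteq [(m+1)n-1]$ with $\sum_{t \in T} t \equiv -1 \pmod n$ equals
\[
\frac{1}{n} \sum_{j=0}^{n-1} \zeta^{j} \cdot [q^N]\!\!\prod_{t=1}^{(m+1)n-1}\bigl(1 + q\, \zeta^{-jt}\bigr),
\]
where the $\zeta^j$ factor accounts for the target residue $-1$ (note $\zeta^{j \cdot (-1)} \cdot \zeta^{j} = 1$ would-be; more carefully one writes $\frac1n\sum_j \zeta^{j}\sum_T q^{|T|}\zeta^{-j\sum T}$ and reads off the $q^N$ coefficient). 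For each $j$ with $d = \GCD(j, n)$ and $n/d = e$, the product $\prod_{t=1}^{(m+1)n-1}(1 + q\zeta^{-jt})$ groups the $t$'s into residue classes mod $e$: the primitive $e$-th root $\zeta^{-j}$ satisfies $\prod_{k=0}^{e-1}(1 + q\,\omega^k) = 1 - (-q)^e$ for $\omega$ a primitive $e$-th root of unity, and one must track the partial last block of length $(m+1)n - 1 \equiv -1 \pmod e$. After this bookkeeping the coefficient of $q^N$ collapses to a single binomial $\binom{(m+1)d - 1}{md}$ up to a sign $(-1)^{m(n+d)}$, and collecting terms by $d \mid n$ with the standard identity $\sum_{j: \GCD(j,n) = n/d} \zeta^{j} \leftrightarrow \mu(n/d)$ (Ramanujan-sum style, using that the target residue $-1$ is coprime to $n$) yields
\[
|Y_{m,n}| = \frac{1}{n}\sum_{d \mid n} (-1)^{m(n+d)} \mu(n/d) \binom{(m+1)d-1}{md}.
\]

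\textbf{Alternative for Step 2, and the main obstacle.} A cleaner route avoids roots of unity entirely: partition $B_{m,n}$ according to the primitive root of each word. By Lemma~\ref{lem:w_lambda_primitive} every word in $B_{m,n}$ is itself primitive, but to run a M\"obius argument it is more convenient to work with the larger set $B'_{m,n}$ of \emph{all} $m$-balanced words of length $(m+1)n$ with weight $\equiv -1 \pmod n$ (dropping the condition $w_1 = 0$), count these directly as a subset-sum over all of $\{1, \dots, (m+1)n\}$, relate the two counts, and then sieve out imprimitive words; a word with primitive period $(m+1)d$ contributes a rescaled subset-sum count on $[(m+1)d]$, and the weight-mod-$n$ condition on the whole word translates (via the computation already carried out in the proof of Lemma~\ref{lem:w_lambda_primitive}, equation~\eqref{eqn:weight_alternative_2}) into a weight-mod-$d$ condition on the period, producing the $\binom{(m+1)d-1}{md}$ term. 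I expect the genuinely delicate point to be the sign $(-1)^{m(n+d)}$: it arises from the parity of $e = n/d$ interacting with the $1 - (-q)^e$ factorization (equivalently, from whether $md$ is even, which was exactly the parity subtlety exploited in Lemma~\ref{lem:w_lambda_primitive}), and one must check the two cases $m$ even / $m$ odd carefully to see that all the scattered signs assemble into the stated $(-1)^{m(n+d)}$. The binomial count and the M\"obius inversion are routine; the sign bookkeeping is where care is required.
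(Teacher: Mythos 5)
Your Step~1 is exactly the paper's argument: the bijection $w \mapsto \{\,j-1 \suchthat w_j = 1\,\}$ onto $N$-subsets of $[(m+1)n-1]$, with the observation that subtracting $1$ from each element shifts the sum by $-N \equiv 0 \pmod n$, is precisely how the paper reduces the corollary to a subset-sum count. Where you diverge is Step~2: the paper does not prove the closed form at all, but simply invokes a known enumeration (Chapman's theorem on subsets with prescribed subset sum, with parameters $u=m+1$, $v=m$, $c=n-1$), whereas you sketch a self-contained roots-of-unity derivation. Your sketch is sound and does close, though you leave the decisive computation unverified. To finish it: for $j$ with $d=\GCD(j,n)$ and $e=n/d$, adjoining the phantom index $t=(m+1)n$ gives
\begin{align*}
\prod_{t=1}^{(m+1)n-1}\bigl(1+q\zeta^{-jt}\bigr)=\frac{\bigl(1-(-q)^{e}\bigr)^{(m+1)n/e}}{1+q},
\end{align*}
and extracting $[q^N]$ using $\sum_{a=0}^{A}(-1)^a\binom{M}{a}=(-1)^{A}\binom{M-1}{A}$ with $A=N/e=md$ (note $e\mid N$) yields $(-1)^{N}(-1)^{md}\binom{(m+1)d-1}{md}=(-1)^{m(n+d)}\binom{(m+1)d-1}{md}$; the Ramanujan sum $\sum_{\GCD(j,n)=d}\zeta^{\pm j}=\mu(n/d)$ then assembles the formula. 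So the sign you worried about falls out in one line rather than requiring a case analysis on the parity of $m$. The trade-off between the two routes is clear: the paper's citation is shorter but outsources the content; your version makes the corollary self-contained at the cost of the generating-function bookkeeping above, which you should include explicitly for the argument to count as a proof.
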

\begin{proof}
Given $w=w_1\dots w_{(m+1)n}\in B_{m,n}$, associate an $N$-element subset $S_w$ of $[(m+1)n-1]$ by
\begin{align*}
	S_w=\{j-1\suchthat w_j=1\}.
\end{align*}
It is clear that the sum of elements in $S_w$ is $-1 \: (\md n)$, and that this correspondence sets up a bijection between $B_{m,n}$ and $N$-elements subsets of $[(m+1)n-1]$ with subset sum equal to $-1 \: (\md n)$.
It remains to count such subsets, and we appeal to \cite[Theorem 1.1]{Ch20} to this end.
Setting $u=m+1$, $v=m$ and $c=n-1$ in loc.\ cit.\ implies that
\begin{align}
B_{m,n}&=\frac{1}{(m+1)n}\sum_{d|n} (-1)^{m(n+d)}\mu(n/d)\binom{(m+1)d}{md}\nonumber\\
&=\frac{1}{n}\sum_{d|n} (-1)^{m(n+d)}\mu(n/d)\binom{(m+1)d-1}{md},
\end{align}
which completes the proof.
\end{proof}

\section{The \texorpdfstring{$h$}{h}-positivity of \texorpdfstring{$\mathrm{Frob}(\widehat{\tau}_{m,n})$}{Frob(tauHat m,n)}}
In this section we establish the $h$-positivity of the action $\widehat{\tau}_{m,n}$ on $\widehat{\p C}_{m,n}$.
We do this by determining representatives of equivalence classes in $\widehat{C}_{m,n}$ that carry the natural action of $S_N$ which permutes coordinates.
As remarked earlier,  in \cite{KT20}, we made a choice of representatives with the property that the first $N-1$ coordinates give a rational parking function.
Unfortunately, this choice does not give rise to an $S_N$-stable set typically.

\begin{lemma}\label{lem:orbit_by_shifts}
	For every $\lambda\in Y_{m,n}$, the cardinality of the set $\{\sort\circ \shift^j(\lambda)\suchthat 0\leq j\leq n-1\}$ is $n$.
\end{lemma}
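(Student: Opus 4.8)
The plan is to show that the $n$ sequences $\shift^0(\lambda),\shift^1(\lambda),\dots,\shift^{n-1}(\lambda)$ all lie in distinct $S_N$-orbits of $\p C_{m,n}$; since applying $\sort$ merely picks out the canonical (partition) representative of an orbit, distinctness of orbits is equivalent to the sorted tuples being distinct, which gives the claimed cardinality $n$ (it is clearly at most $n$ because $\shift^n$ is the identity).

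First I would translate $\shift$ into the language of the associated word $w_\lambda \in B_{m,n}$ and the lattice path $L_\lambda$. Drawing $\lambda$ in the $n\times N$ box, each of the $N$ parts $\lambda_i \in \{0,\dots,n-1\}$ contributes one vertical step; the multiset of parts is recovered from the horizontal positions of the vertical steps. Applying $\shift$ adds $1$ modulo $n$ to every coordinate, which geometrically cyclically rotates the path $L_\lambda^\infty$ by one unit in the horizontal direction — equivalently, $w_{\shift(\lambda)}$ is obtained from $w_\lambda$ by a specific cyclic rotation (by the block between two consecutive $0$'s, suitably interpreted). The key point is that $\sort\circ\shift^j(\lambda) = \sort\circ\shift^{j'}(\lambda)$ would force $w_{\shift^j(\lambda)}$ and $w_{\shift^{j'}(\lambda)}$ to be equal as words (since $\lambda \mapsto w_\lambda$ and its sorting are canonical), hence that $w_\lambda$ is invariant under the corresponding nonzero cyclic rotation $\rot^{?}$, contradicting Lemma~\ref{lem:w_lambda_primitive} which says $w_\lambda$ is primitive.

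The step I expect to be the main obstacle is making the passage from ``$\shift$ on tuples'' to ``a cyclic rotation on words'' precise and verifying that a nontrivial $\shift^j$ with $0<j<n$ corresponds to a \emph{nontrivial} (proper) cyclic rotation of the length-$(m+1)n$ word $w_\lambda$. One clean way to handle this: order the coordinates of $\shift^j(\lambda)$ not by sorting but by the cyclic order induced on $\{0,1,\dots,n-1\}$ starting from $-j \bmod n$, and observe that reading $L_{\lambda}^{\infty}$ starting from a shifted position produces exactly $w$ rotated by the number of steps in $j$ full ``columns'' of the path. Since the path makes $n$ horizontal steps over its length $(m+1)n$, shifting by $j$ columns corresponds to $\rot^{t}$ where $t$ is the position in $w_\lambda$ of the $(j{+}1)$-st $0$; for $0<j<n$ this is a proper rotation, and primitivity of $w_\lambda$ forbids $\rot^t(w_\lambda)=w_\lambda$. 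Therefore all $n$ shifts give genuinely different words, hence different partitions after sorting, completing the proof.

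An alternative, more self-contained route avoids words entirely: if $\sort\circ\shift^j(\lambda)=\lambda$ for some $0<j<n$, then the multiset $\{\lambda_i \bmod n\}$ equals the multiset $\{(\lambda_i+j)\bmod n\}$, so the indicator function $i\mapsto \#\{r : \lambda_r = i\}$ on $\Z/n\Z$ is invariant under translation by $j$, forcing it to be constant on cosets of the subgroup generated by $j$; but then $|\lambda|\bmod n$ and the count structure are pinned down in a way incompatible with the congruence $\weight(w_\lambda)\equiv -1 \pmod n$ established in the previous lemma — essentially the same arithmetic obstruction as in Lemma~\ref{lem:w_lambda_primitive}. I would present whichever of these is shorter once the details are written out; I lean toward the primitivity argument since Lemma~\ref{lem:w_lambda_primitive} is already available and does the arithmetic work.
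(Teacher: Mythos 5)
Your proposal is correct and matches the paper's own argument: the paper likewise labels the steps of $L_\lambda^\infty$, identifies $\sort\circ\shift^j(\lambda)$ with the subpath starting at the $j$-th horizontal step, deduces $w_{\sort\circ\shift^j(\lambda)}=\rot^{a_j-1}(w_\lambda)$ for the position $a_j$ of that step, and invokes the primitivity of $w_\lambda$ (Lemma~\ref{lem:w_lambda_primitive}) to conclude the $n$ resulting words, hence partitions, are pairwise distinct. (Only a harmless off-by-one in your rotation index differs from the paper's bookkeeping.)
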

\begin{proof}
	We exploit a nice connection between rotations of $m$-balanced words and the shift map applied to elements of $Y_{m,n}$.
	Consider the lattice path $L_{\lambda}$ associated to $\lambda\in Y_{m,n}$, as well as its infinite extension $L_{\lambda}^{\infty}$.
	Label the $(m+1)n$ steps of $L_{\lambda}$ with integers $1$ through $(m+1)n$ going right to left.

	\medskip

	Let the labels of the horizontal steps be $a_0<a_1<\cdots<a_{n-1}$.
	This given, here is how one may compute $\sort\circ \shift^j(\lambda)$ for $0\leq j\leq n-1$.
Consider the subpath of $L_{\lambda}^{\infty}$ of length $(m+1)n$ that begins with the horizontal step labeled $a_j$ and proceeds northwest.
This subpath may be treated as $L_{\widetilde{\lambda}}$ (after a potential translation) for a unique partition $\widetilde{\lambda}\in Y_{m,n}$.
It is not hard to see that $\widetilde{\lambda}=\sort\circ \shift^j(\lambda)$ by realizing that $\sort\circ\shift$ is essentially a `rotation' given that $\lambda$ is weakly decreasing from left to right. For instance, consider Figure~\ref{fig:L_lambda_prime} where the partition $\sort\circ\shift^3(\lambda)$ is depicted by the orange shaded region in the translated $5\times 5$ box for $\lambda=(2,2,1,1,0)$.

\medskip

From the fact that $L_{\lambda}^{\infty}$ is built by periodically repeating $L_{\lambda}$, it follows that
\begin{align}
w_{\widetilde{\lambda}}=\rot^{a_j-1}(w_{\lambda}).
\end{align}
Now, recall that $w_{\lambda}$ is primitive by Lemma~\ref{lem:w_lambda_primitive}, and thus $w_{\widetilde{\lambda}}$ and $w_{\lambda}$ are distinct.
This in turn implies that $\widetilde{\lambda}$  and $\lambda$ are distinct as well.
It follows that the set $\{\sort\circ \shift^j(\lambda)\suchthat 0\leq j\leq n-1\}$ indeed has $n$ elements, one for each conjugate of $w_{\lambda}$ beginning with a $0$.
\end{proof}
\begin{figure}[!ht]
\begin{tikzpicture}[scale=.25]
    \draw[gray,very thin] (-5,-1) grid (6,10);
    \draw[line width=0.15mm, black, <->] (-5,1)--(6,1);
    \draw[line width=0.15mm, black, <->] (0,-1)--(0,10);
    \draw[fill=gray,opacity=0.15] (0, 1) rectangle (5,6) {};
    \draw[fill=gray,opacity=0.55] (-3, 3) rectangle (2,8) {};
    \draw[fill=blue,opacity=0.20] (0, 1) rectangle (1,5) {};
    \draw[fill=blue,opacity=0.20] (1, 1) rectangle (2,3) {};
    \draw[fill=orange,opacity=0.35] (-3, 3) rectangle (0,6) {};
    \draw[fill=orange,opacity=0.35] (0, 3) rectangle (1,5) {};



    \draw[red, line width=0.4mm] (5,1)--(2,1)--(2,3)--(1,3)--(1,5)--(0,5)--(0,6);
    \draw[red,line width=0.25mm](0,6)--(-3,6)--(-3,8)--(-4,8)--(-4,10)--(-5,10);
    \draw[red, line width=0.25mm] (5,1)--(5,0)--(6,0)--(6,-1);

  \end{tikzpicture}
  \caption{The partition $\sort\circ\shift^3(\lambda)$ realized by $L_{\widetilde{\lambda}}$ where $\lambda=(2,2,1,1,0)$.}
  \label{fig:L_lambda_prime}
\end{figure}
We now exploit this lemma to extract a set of representatives for the equivalence classes in $\widehat{\p C}_{m,n}$ that is $S_N$-stable.

Given $w\in B_{m,n}$, we denote the associated partition $\lambda$ satisfying $w_{\lambda}=w$ as $\lambda_w$.
Recall that the conjugacy class of $w$ consists of all $(m+1)n$ cyclic rotations thereof.
Amongst these cyclic rotations, there is a unique lexicographically smallest word, where the order is inherited by declaring $0<1$.
Such a word is known as a \emph{Lyndon word}.
Observe the crucial fact that Lyndon word in the conjugacy class of $w\in B_{m,n}$ must itself belong to $B_{m,n}$, as it must begin with a $0$ and cyclic rotations preserve weights.
We denote the set of Lyndon words in $B_{m,n}$ by $B_{m,n}^{L}$.

\begin{example}\emph{
Consider $\p C_{1,4}$. The 8 partitions in $Y_{1,4}$ (commas and parentheses suppressed) are given below. Those in the same column are obtained by applying $\sort\circ \shift^j$ for $0\leq j\leq 3$ to the highlighted partition.
\begin{align*}
\begin{array} {cc}
\tcr{2100} & \tcr{1110}\\
3211 & 2221\\
3220 & 3332\\
3310 & 3000
\end{array}
\end{align*}
It can be checked that the words $w_{\lambda}$ corresponding to the highlighted partitions are indeed Lyndon, and thus
\begin{align*}
	B_{1,4}^L=\{00101011,00011101\}.
\end{align*}
More importantly, since elements of $Y_{m,n}$ index orbits of $\p C_{m,n}$ and $\widehat{\p C}_{m,n}$ is obtained by identifying elements of $\p C_{m,n}$ up to shifts, it follows that the orbits of $2100$ and $1110$ generate a system of representatives for equivalence classes in $\widehat{\p C}_{m,n}$.
This is the underlying idea of what follows.
}
\end{example}

Let $\p O_{\lambda_w}$ denote the $S_N$-orbit of $\lambda_w$ for $w\in B_{m,n}$.
We claim that the set of elements of $\p C_{m,n}$ that belong to the orbit of $\lambda_w$ for a Lyndon word $w\in B_{m,n}$ gives a complete set of  representatives for equivalence classes in $\widehat{\p C}_{m,n}$.
Indeed, we know that
\begin{align}\label{eqn:disjoint_over_all}
\p C_{m,n}&=\coprod_{w\in B_{m,n}} \p O_{\lambda_w},
\end{align}
and by invoking Lemma~\ref{lem:orbit_by_shifts} to rewrite the right-hand side of \eqref{eqn:disjoint_over_all} we get that
\begin{align}\label{eqn:disjoint_over_lyndon}
\p C_{m,n}&=\coprod_{w\in B_{m,n}^L}\coprod_{0\leq j\leq n-1}\p O_{\sort\circ\shift^j(\lambda_w)}\nonumber\\
&=\coprod_{w\in B_{m,n}^L}\coprod_{0\leq j\leq n-1}\p O_{\shift^j(\lambda_w)}.
\end{align}
Since $\shift$ commutes with the action of $S_N$, we can rewrite \eqref{eqn:disjoint_over_lyndon} as
\begin{align}\label{eqn:disjoint_over_lyndon_2}
\p C_{m,n}&=\coprod_{w\in B_{m,n}^L}\coprod_{0\leq j\leq n-1} \shift^{j}(\p O_{\lambda_w}),
\end{align}
where we interpret $\shift^{j}(\p O_{\lambda_w})$ as the set obtained by applying $\shift^j$ to all elements in $\p O_{\lambda_w}$.
Since $\widehat{\p C}_{m,n}$ is obtained by identifying sequences in $\p C_{m,n}$ up to shifts, \eqref{eqn:disjoint_over_lyndon_2} tells us that we may identify $\widehat{\p C}_{m,n}$ with $\coprod_{w\in B_{m,n}^L}\p O_{\lambda_w}$, and thus $\widehat{\tau}_{m,n}$ is indeed the permutation action on the latter set.
We are now ready to record an immediate consequence of this argument.

\begin{theorem}\label{thm:h_positivity}
	To each $w\in B_{m,n}^L$, associate a sequence $\mathrm{c}(w)$ that records the lengths of the maximal runs of $1$s in w. Then we have that
	\[
	\mathrm{Frob}(\widehat{\tau}_{m,n})=\sum_{w\in B_{m,n}^L}h_{\sort(\mathrm{c}(w))}.
	\]
	In particular, we have that the number of orbits of $\widehat{\p C}_{m,n}$ under $\widehat{\tau}_{m,n}$ is given by
	\[
	|B_{m,n}^{L}|=\frac{|Y_{m,n}|}{n}=\frac{1}{n^2}\sum_{d|n} (-1)^{m(n+d)}\mu(n/d)\binom{(m+1)d-1}{md}.
	\]
\end{theorem}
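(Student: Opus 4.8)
The plan is to combine the orbit decomposition \eqref{eqn:disjoint_over_lyndon_2} with the observation that the $S_N$-orbit of a partition $\lambda \in Y_{m,n}$ carries the permutation action of $S_N$ on the set of rearrangements of $\lambda$, which is exactly the action on cosets $S_N / S_\mu$ where $\mu$ is the multiset of multiplicities of the parts of $\lambda$. Concretely, if $\lambda$ has parts with multiplicities $m_0, m_1, \dots, m_{n-1}$ (the number of entries of $\lambda$ equal to each value in $\{0, \dots, n-1\}$), then $\mathrm{Frob}$ of the permutation module $\mathbb{C}[\p O_\lambda]$ is the complete homogeneous symmetric function $h_{\sort(m_0, \dots, m_{n-1})}$, by the standard fact that the Frobenius characteristic of the $S_N$-action on $S_N/(S_{\mu_1} \times \cdots \times S_{\mu_k})$ is $h_\mu$.

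First I would make precise the bookkeeping: for $w \in B_{m,n}^L$ with associated partition $\lambda_w$, I claim the sorted multiplicity vector of $\lambda_w$ equals $\sort(\mathrm{c}(w))$, up to the parts equal to zero which do not affect $h_\mu$. This follows from the path description: reading $w_{\lambda}$ from right to left records the lattice path $L_\lambda$, and a maximal run of $1$s of length $r$ in $w_\lambda$ corresponds to $r$ consecutive vertical steps of $L_\lambda$, i.e. to exactly $r$ parts of $\lambda$ taking a common value strictly between consecutive horizontal steps. Thus the nonzero entries of $\mathrm{c}(w)$ are precisely the multiplicities of the distinct nonzero part-values of $\lambda_w$, and since $h_{\mu \cup (0)} = h_\mu$ we may ignore the multiplicity of the part $0$. (A small check: one must confirm that adding a part equal to $n-1$ wrapping around in the infinite path does not merge two runs; but $w_\lambda$ always begins with $0$, so the first and last letters cannot both be $1$, and no spurious merge occurs.)

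Second I would assemble the Frobenius characteristic. From \eqref{eqn:disjoint_over_lyndon_2}, after passing to $\widehat{\p C}_{m,n}$, we identified $\widehat{\p C}_{m,n}$ with $\coprod_{w \in B_{m,n}^L} \p O_{\lambda_w}$ as $S_N$-sets, hence $\mathbb{C}[\widehat{\p C}_{m,n}] \cong \bigoplus_{w \in B_{m,n}^L} \mathbb{C}[\p O_{\lambda_w}]$ as $S_N$-modules. Taking Frobenius characteristics and using the previous paragraph gives
\[
\mathrm{Frob}(\widehat{\tau}_{m,n}) = \sum_{w \in B_{m,n}^L} h_{\sort(\mathrm{c}(w))},
\]
which is manifestly $h$-positive. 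Setting all variables so as to extract the number of orbits — equivalently, counting $|B_{m,n}^L|$ directly — gives the orbit count. The equality $|B_{m,n}^L| = |Y_{m,n}|/n$ is immediate from Lemma~\ref{lem:orbit_by_shifts}: the maps $\lambda \mapsto \sort \circ \shift^j(\lambda)$ partition $Y_{m,n}$ into classes of size exactly $n$, each class containing exactly one $w$ whose word is Lyndon (the lexicographically smallest conjugate beginning with $0$, which lies in $B_{m,n}$ since rotations preserve weight and it begins with $0$). The closed formula then follows by substituting the expression for $|Y_{m,n}|$ from the preceding corollary.

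The main obstacle is the first step: verifying cleanly that the sorted composition $\mathrm{c}(w)$ of run-lengths of $1$s really does match the multiplicity partition of $\lambda_w$ (modulo zero parts), and in particular handling the boundary behaviour of the path correctly so that no two runs are erroneously merged or split. Everything else is either a direct appeal to the standard $h_\mu \leftrightarrow S_N/S_\mu$ correspondence or a repackaging of Lemma~\ref{lem:orbit_by_shifts} and the earlier cardinality corollary.
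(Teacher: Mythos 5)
Your overall route is the same as the paper's: decompose $\widehat{\p C}_{m,n}$ into the orbits $\p O_{\lambda_w}$ for $w\in B_{m,n}^L$ via \eqref{eqn:disjoint_over_lyndon_2}, apply the standard fact that a transitive permutation module with Young-subgroup stabilizer $S_\mu$ has Frobenius characteristic $h_\mu$, match $\mu$ with $\sort(\mathrm{c}(w))$, and get the orbit count from Lemma~\ref{lem:orbit_by_shifts} together with the earlier cardinality corollary. However, the bookkeeping step --- the one you yourself single out as the main obstacle --- is stated incorrectly. It is not true that ``the nonzero entries of $\mathrm{c}(w)$ are precisely the multiplicities of the distinct nonzero part-values of $\lambda_w$,'' nor may you ``ignore the multiplicity of the part $0$.'' Take $n=4$, $m=1$, $\lambda=(2,1,0,0)$, so $w_\lambda=00101011$ and $\mathrm{c}(w)=(1,1,2)$: the run of length $2$ at the end of the word consists exactly of the two vertical steps at $x=0$, i.e.\ the two parts equal to $0$, while the multiplicities of the nonzero part-values are only $(1,1)$. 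Moreover the stabilizer of $(2,1,0,0)$ in $S_4$ is $S_1\times S_1\times S_2$, the $S_2$ permuting the two zero coordinates, so this orbit contributes $h_{211}$ and not $h_{11}$; dropping the multiplicity of the value $0$ changes the answer (indeed the degree). The identity $h_{\mu\cup(0)}=h_\mu$ lets you discard part-values that do \emph{not} occur (those of multiplicity $0$), not the part-value $0$ itself.

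The correct statement, which makes your argument go through verbatim, is: the maximal runs of $1$s in $w_\lambda$ are in bijection with the distinct values (including $0$) occurring among the coordinates of $\lambda_w$, the run corresponding to the value $v$ having length equal to its multiplicity $m_v$; since $\lambda$ is weakly decreasing, the vertical steps at a fixed abscissa are consecutive and are separated from those at other abscissas by at least one horizontal step, and since $w_1=0$ the first run is genuinely delimited. Hence $\sort(\mathrm{c}(w))$ is the full multiplicity partition of $\lambda_w$ and $\mathrm{Frob}(\bC[\p O_{\lambda_w}])=h_{\sort(\mathrm{c}(w))}$ with no adjustment needed. (Your parenthetical worry about runs merging around the wrap is moot: $\mathrm{c}(w)$ is computed from the finite word $w$, not from its cyclic closure.) With this correction the remainder of your argument coincides with the paper's proof.
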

Recall that in \cite[Theorem 6.1]{KT20}, the number of orbits was computed by way of explicit character values.
See also \cite[Section 5]{Ray18} for a topological interpretation for the numbers $|B_{m,n}^{L}|$.

\begin{example}
\emph{
Consider $m=2$ and $n=3$.
The three Lyndon words in $B_{m,n}^{L}$ are
\[
001011111 \hspace{5mm} 001111011 \hspace{5mm} 010110111.
\]
The corresponding $\sort(\mathrm{c}(w))$ are $51$, $42$, and $321$. Theorem~\ref{thm:h_positivity} implies that
\[
\mathrm{Frob}(\widehat{\tau}_{m,n})=h_{51}+h_{42}+h_{321}.
\]}
\end{example}

%
%

\section{The trimmed standard permutahedron}\label{sec:permutahedron}
In this section we focus on the case $m=1$, or equivalently, $N=n$. Hence we suppress the $m$ from all notions introduced earlier.
Our goal is to establish Theorem~\ref{thm:main} stated in the introduction.

\medskip

Recall that given $\lambda\coloneqq (\lambda_1\geq \cdots\geq  \lambda_n)\in \bZ^n$, we let $P_{\lambda}$ denote the polytope in $\bR^n$ defined by considering the convex hull of the $S_n$ orbit of $\lambda$.
The $P_{\lambda}$'s are referred to as \emph{usual permutahedra}.
The set of lattice points in $P_{\lambda}$, that is $\mathrm{Lat}(P_{\lambda})$, is clearly $S_n$-stable, and one obtains a natural class of $S_n$-modules in this manner. Furthermore, since the stabilizer of any point in $\mathrm{Lat}(P_{\lambda})$ is a Young subgroup of $S_n$, we are guaranteed $h$-positivity of the associated Frobenius characteristics.
It is a priori unclear whether these modules are of any value other than the intrinsic one. In what follows, we discuss the case of a special permutahedron, and show that its set of lattice points indexes the orbits of $\widehat{\p C}_{n}$ under the action of $\widehat{\tau}_n$.

\medskip


Fix $n\geq 2$.
Let $\delta_n=(n-2,\dots,1,0,0)\in \bN^n$.
We are interested in the $S_n$ action on $\mathrm{Lat}(P_{\delta_n})$, which we denote by $\gamma_n$.
The reason behind this is the  equality
\begin{align}\label{eqn:lattice_point_count}
|\mathrm{Lat}(P_{\delta_n})|=n^{n-2}.
\end{align}
We briefly explain how to arrive at this equality through work of Postnikov \cite{Pos09}, leaving it to the reader to check loc.\ cit.\ for further details.
Consider the \emph{standard permutahedron} $P_{(n-1,\dots,1,0)}$.
The Minkowski difference of the standard permutahedron with the standard simplex $P_{(1,0,\dots,0)}$ is another permutahedron $P_{(n-2,n-2,\dots,1,0)}$. Following \cite[Definition 11.2]{Pos09}, we refer to the latter as the \emph{trimmed} standard permutahedron. By \cite[Corollary 11.5]{Pos09}, we have that
\begin{align}
	|\mathrm{Lat}(P_{(n-2,n-2,\dots,1,0)})|=n^{n-2}.
\end{align}
That this is equivalent to the equality in \eqref{eqn:lattice_point_count} is because translating $P_{(n-2,n-2,\dots,1,0)}$ by $(n-2,\ldots,n-2)$ followed by negating all coordinates maps it to $P_{\delta_n}$.
It is also clear that this map is $S_n$-equivariant, so the $S_n$ action on $P_{\delta_n}$ is isomorphic to that on $P_{(n-2,n-2,\dots,1,0)}$, which also explains the title of this section.

\medskip

The right-hand side of \eqref{eqn:lattice_point_count} naturally raises the question whether this $S_n$-action is related to the parking function representation. Indeed, as we shall soon establish, upon restricting this action to $S_{n-1}$ we recover the parking function representation.

\begin{example}
\emph{
Suppose $n=4$. Then $\delta_n=(2,1,0,0)$. The 16 elements in $\mathrm{Lat}(P_{\delta_n})$ are given by the orbits of $(2,1,0,0)$ and $(1,1,1,0)$. It follows that
\[
\mathrm{Frob}(\gamma_4)=h_{211}+h_{31},
\]
which upon restricting to $S_3$ gives $h_3+3h_{21}+h_{111}$. This last expression is the Frobenius characteristic of the $S_3$ action on parking functions of length $3$.
An alternative perspective is by projecting $P_{\delta_4}$ onto the hyperplane $x_4=0$ in $\bR^4$ and realizing that $S_3$ acts on the lattice points of the resulting polytope. The $h_{111}$ term comes from the orbit of the point $(2,1,0)$, the three $h_{21}$ terms come from the orbits of $(1,0,0)$, $(2,0,0)$, and $(1,1,0)$ respectively, and the $h_3$ term comes from the orbit of $(1,1,1)$.
}
\end{example}

To establish that $\gamma_n$ is isomorphic to $\tau_{n}$, we identify representatives of the equivalence classes in $\widehat{\p C}_{n}$ that belong to $\mathrm{Lat}(P_{\delta_n})$.
Clearly, the $S_n$ action on $\mathrm{Lat}(P_{\delta_n})$ has orbits indexed by  elements of
 $\mathrm{Par}_{\leq \delta_n}$, which we defined to be the set of lattice points $(\lambda_1,\dots,\lambda_n)$  in $P_{\delta_n}$ such that $\lambda_1\geq \cdots \geq \lambda_n$.
Put differently, $\mathrm{Par}_{\leq \delta_n}$ consists of all partitions of size $\binom{n-1}{2}$ and length at most $n$ that are dominated by $\delta_n$ \cite{Rad52}.
Note that all elements in $\mathrm{Par}_{\leq \delta_n}$ do indeed belong to $\widehat{\p C}_{n}$.

\begin{lemma}\label{lem:unique_rep_dominated}
Given $\lambda=(\lambda_1,\dots,\lambda_n)\in \mathrm{Par}_{\leq \delta_n}$, no element in the set $\{\sort\circ \shift^j(\lambda)\suchthat 1\leq j\leq n-1\}$ belongs to $\mathrm{Par}_{\leq \delta_n}$.
\end{lemma}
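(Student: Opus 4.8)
The plan is to combine two facts: first, the orbits $\sort\circ\shift^{j}(\lambda)$ for $0\le j\le n-1$ are exactly the $n$ distinct partitions whose associated binary words are the $n$ conjugates of $w_\lambda$ beginning with $0$ (this is precisely Lemma~\ref{lem:orbit_by_shifts} and its proof, specialized to $m=1$, so $N=n$ and each word has length $2n$ with $n$ ones and $n$ zeros); and second, membership in $\mathrm{Par}_{\le\delta_n}$ is a very rigid condition. So I would translate ``$\mu\in\mathrm{Par}_{\le\delta_n}$'' into a statement about $w_\mu$ and show that among all $n$ conjugates of a given primitive word $w_\lambda$ beginning with $0$, at most one can satisfy that statement. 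Since $\lambda$ itself is one such conjugate and lies in $\mathrm{Par}_{\le\delta_n}$ by hypothesis, that forces all the others to fail, which is exactly the claim.

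\textbf{Key steps.} First I would recall that for $\mu\in Y_n$, writing $w_\mu = w_1\cdots w_{2n}$, the relation $w_j=1 \iff j = n-\mu_i+i$ for a unique $i$ holds, and hence $\mu$ is dominated by $\delta_n = (n-2,n-3,\dots,1,0,0)$ iff the partial sums satisfy $\mu_1+\cdots+\mu_k \le (n-2)+(n-3)+\cdots+(n-1-k)$ for all $k$. Translating partial sums of $\mu$ into positions of $0$s and $1$s in $w_\mu$: the dominance condition $\mu\le\delta_n$ becomes a ``prefix'' or ``staircase'' condition on $w_\mu$ — concretely, reading $w_\mu$ from the left, every prefix must contain at least as many (suitably shifted) $0$s as the word of $\delta_n$ does; equivalently $w_\mu$ lies weakly above the lattice-path $w_{\delta_n}$. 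Because $w_{\delta_n}$ is the word $0\,01\,01\cdots 01\,1$ (i.e.\ $\delta_n$ has a near-staircase shape), this dominance translates into the combinatorial statement that $w_\mu$ has a $0$ in position $1$ and in position $2$, and more generally the $0$s of $w_\mu$ occur ``early enough''. The second step is to observe that a word with $0$s in both positions $1$ and $2$ is quite special: at most one cyclic rotation of a primitive $2n$-letter word with $n$ zeros and $n$ ones can begin with two $0$s \emph{and} satisfy the full dominance/staircase inequality — indeed if $w$ and a nontrivial rotation $\rot^{a}(w)$ both satisfied the staircase condition one could derive that $w$ is a repetition, contradicting primitivity (Lemma~\ref{lem:w_lambda_primitive}). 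Finally, since $\lambda\in\mathrm{Par}_{\le\delta_n}$ supplies one such rotation, uniqueness kills all the rest, giving the lemma.

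\textbf{Alternative, more hands-on route.} Rather than the word reformulation, one can argue directly with partitions. If both $\lambda$ and $\mu\coloneqq\sort\circ\shift^{j}(\lambda)$ (for some $1\le j\le n-1$) lay in $\mathrm{Par}_{\le\delta_n}$, then both are partitions of $\binom{n-1}{2}$ dominated by $\delta_n$; in particular both have $\lambda_1,\mu_1 \le n-2$ and both have at least two parts equal to $0$ (since $\delta_n$ ends in $0,0$ and dominance forces $\lambda_{n-1}=\lambda_n=0$). Via the path picture in the proof of Lemma~\ref{lem:orbit_by_shifts}, $\mu$ corresponds to re-basing the infinite path $L_\lambda^{\infty}$ at a different horizontal step $a_j$; the condition ``$\lambda$ has two trailing zeros'' says two of the horizontal steps of $L_\lambda$ are consecutive and occur at the very bottom, while ``$\lambda_1\le n-2$'' says the topmost horizontal step is not at the extreme right. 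I would show these two ``boundary'' constraints pin down the re-basing point $a_j$ uniquely among $a_0,\dots,a_{n-1}$, again using primitivity to rule out coincidences.

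\textbf{Main obstacle.} The crux is the bookkeeping in the first route: correctly converting the dominance order $\lambda\le\delta_n$ into a clean, checkable condition on the binary word $w_\lambda$ (or equivalently on the lattice path), and then proving the ``at most one rotation'' statement cleanly. The dominance-to-path dictionary is standard but the indices shift by $1$ in annoying ways (the word has length $2n$, positions $n-\lambda_i+i$, etc.), so the real work — and the main place an error could hide — is getting that translation exactly right and then extracting primitivity as the obstruction to two good rotations. Once the correct word-condition is in hand, the uniqueness argument should be short.
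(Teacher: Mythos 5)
Your write-up is a plan rather than a proof, and the one step that constitutes the entire content of the lemma is left as an assertion. In your first route, everything hinges on the claim that ``if $w$ and a nontrivial rotation $\rot^{a}(w)$ both satisfied the staircase condition one could derive that $w$ is a repetition, contradicting primitivity.'' You never derive this, and it is not the mechanism that actually works here: the paper's proof does not use primitivity at all in this lemma. Instead it first notes that since both $\lambda$ and $\sort\circ\shift^j(\lambda)$ would have size $\binom{n-1}{2}$, the horizontal step at which one re-bases $L_\lambda^{\infty}$ must touch the diagonal $x+y=n$; this splits $\lambda$ into two sub-partitions $\mu$ and $\nu$, and then the dominance of $\lambda$ by $\delta_n$ (applied to the first $j$ parts) gives $|\nu|\geq\binom{n-j-1}{2}$ while the dominance of the shifted partition (applied to its first $n-j$ parts) gives $|\nu|\leq\binom{n-j-1}{2}-1$, an outright numerical contradiction. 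So the obstruction to two ``good'' rotations is an inequality clash, not periodicity, and you have not supplied any argument that periodicity would even follow.

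Your alternative route contains a concretely false step: you claim dominance by $\delta_n$ forces $\lambda_{n-1}=\lambda_n=0$. Since $\lambda$ and $\delta_n$ have the same total size, dominance only gives $\lambda_{n-1}+\lambda_n\geq 0$, which is vacuous; e.g.\ $(1,1,1,0)\in\mathrm{Par}_{\leq\delta_4}$ has $\lambda_3=1$. (Indeed the paper lists the orbits of $(2,1,0,0)$ \emph{and} $(1,1,1,0)$ as making up $\mathrm{Lat}(P_{\delta_4})$.) With that premise gone, the ``two consecutive bottom horizontal steps'' pinning argument collapses. In short: your overall strategy (translate to words/paths, prove at most one rotation is dominated) is compatible with the paper's, but the decisive uniqueness argument is missing in route one and broken in route two.
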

\begin{proof}
We employ the lattice paths $L_{\lambda}$ and $L_{\lambda}^{\infty}$ defined earlier.
Label the horizontal steps in $L_{\lambda}$ with integers $0$ through $n-1$ going right to left.
Consider the fragment $L'$ of $L_{\lambda}^{\infty}$ of length $2n$  that starts with the horizontal step labeled $j$ and proceeds northwest.
As discussed before, the partition determined by $L'$ (in the bottom left corner of the $n\times n$ box this path naturally lives in) is $\sort\circ \shift^j(\lambda)$.
If we let $i$ denote the number of vertical steps in $L$ preceding the horizontal step labeled $j$, then we have
\begin{align}\label{eqn:size_change_upon_shifting}
	|\sort\circ \shift^j(\lambda)|=|\lambda|+(j-i)n
\end{align}

\medskip

Suppose there exists $j\neq 0$ such that $|\sort\circ \shift^j(\lambda)|\in \mathrm{Par}_{\leq \delta_n}$.
From \eqref{eqn:size_change_upon_shifting} it follows that $j=i$.
Thus, the horizontal step labeled $j$ must touch the diagonal $x+y=n$.
Let $\nu=(\nu_1,\dots,\nu_{n-j})$ be the partition determined by the subpath of $L$ restricted to the $(n-j)\times (n-j)$ box in the top left.
Let $\mu=(\mu_1,\dots,\mu_j)$ be the partition determined by the subpath of $L$ restricted to the $j\times j$ box in the bottom right.
Now, observe that
\begin{align}
\lambda&=(n-j+\mu_1,n-j+\mu_2,\dots,n-j+\mu_{j},\nu_1,\dots,\nu_{n-j})\\
\sort\circ \shift^j(\lambda)&=(j+\nu_1,j+\nu_2,\dots,j+\nu_{n-j},\mu_1,\dots,\mu_j).
\end{align}

\medskip

Since $\lambda$ is dominated by $\delta_n$, we know that
\begin{align}
\sum_{k=1}^{j}(n-j+\mu_k) \leq \sum_{k=1}^{j} (n-j+k-2)=j(n-j-1)+\binom{j}{2}.
\end{align}
Since the left-hand side is $|\lambda|-|\nu|=\binom{n-1}{2}-|\nu|$, we may rewrite the above inequality as
\begin{align}\label{eqn:initial_ineq}
|\nu| \geq \binom{n-1}{2}-j(n-j-1)-\binom{j}{2}=\binom{n-j-1}{2}.
\end{align}
On the other hand, since our assumption is that $\sort\circ \shift^j(\lambda)$ is also dominated by $\delta_n$, we obtain
\begin{align}
\sum_{k=1}^{n-j}(j+\nu_k)\leq \sum_{k=1}^{n-j} (j-2+k)=(n-j)(j-1)+\binom{n-j}{2}.
\end{align}
This in turn may be rewritten as
\begin{align}
\label{eqn:final_ineq}
	|\nu|\leq \binom{n-j}{2}-n+j=\binom{n-j-1}{2}-1,
\end{align}
which is in contradiction with the inequality in \eqref{eqn:initial_ineq}.
\end{proof}

To illustrate, consider $n=10$ and  $\lambda=(7,5,5,5,4,4,2,2,2,0)\vdash 36$ as shown in Figure~\ref{fig:fig_1}.
The path $L_{\lambda}$  is shown in the $10\times 10$ box shaded light gray.
The red line shows the diagonal $y+x=10$ and a horizontal step touching this diagonal gives a value $j$ such that $\sort\circ \shift^j(\lambda)\vdash 36$. In the figure, we have chosen $j=6$.
The path $L'$ is shown in the $10\times 10$ box with the darker shade of gray, and we see that $\sort\circ \shift^6(\lambda)=(8,8,8,6,3,1,1,1,0,0)$.
Note that $\nu=(2,2,2,0)$ is determined by the path in the intersection of the two shaded regions.
Also $\mu=(3,1,1,1,0,0)$ is the partition determined by subpath of $L_{\lambda}$ by the $6\times 6$ box in the bottom right.
The reader may verify that $(8,8,8,6,3,1,1,1,0,0)\notin\mathrm{Par}_{\leq \delta_{10}}$.

\medskip

\begin{figure}
\begin{tikzpicture}[scale=.2]
    \draw[gray,very thin] (-8,-1) grid (14,18);
    \draw[line width=0.15mm, black, <->] (-8,1)--(14,1);
    \draw[line width=0.15mm, black, <->] (1,-1)--(1,18);
    \draw[line width=0.25, red] (13,-1)--(-6,18);
    \draw[fill=gray,opacity=0.2] (1, 1) rectangle (11,11) {};
    \draw[fill=gray,opacity=0.4] (-5, 7) rectangle (5,17) {};

    \draw[fill=blue,opacity=0.25] (1, 1) rectangle (5,7) {};
    \draw[fill=green,opacity=0.25] (5, 1) rectangle (6,5) {};
    \draw[fill=green,opacity=0.25] (6, 1) rectangle (7,2) {};
     \draw[fill=green,opacity=0.25] (7, 1) rectangle (8,2) {};
     \draw[fill=blue,opacity=0.3] (-5, 7) rectangle (1,11) {};
    \draw[fill=green,opacity=0.25] (-5, 11) rectangle (-4,15) {};
    \draw[fill=green,opacity=0.25] (-4, 11) rectangle (-2,12) {};
    \draw[fill=orange,opacity=0.25] (1, 7) rectangle (3,10) {};

    \draw[blue, line width=0.4mm] (11,1)--(11,0)--(13,0)--(13,-1);
    \draw[blue, line width=0.4mm] (11,1)--(8,1);
    \draw[blue, line width=0.4mm] (8,1)--(8,2);
    \draw[blue, line width=0.4mm] (8,2)--(6,2);
    \draw[blue, line width=0.4mm] (6,2)--(6,5);
    \draw[blue, line width=0.4mm] (6,5)--(5,5);
    \draw[blue, line width=0.4mm] (5,5)--(5,7);
    \draw[blue, line width=0.4mm] (5,7)--(3,7);
    \draw[blue, line width=0.4mm] (3,7)--(3,10);
    \draw[blue, line width=0.4mm] (3,10)--(1,10)--(1,11);

    \draw[blue, line width=0.4mm] (1,11)--(-2,11)--(-2,12)--(-4,12)--(-4,15)--(-5,15)--(-5,17)--(-7,17)--(-7,18);
    \draw[blue, line width=0.4mm] (8,1)--(8,2);
    \draw[blue, line width=0.4mm] (8,2)--(6,2);
    \draw[blue, line width=0.4mm] (6,2)--(6,5);
    \draw[blue, line width=0.4mm] (6,5)--(5,5);
    \draw[blue, line width=0.4mm] (5,5)--(5,7);
    \draw[blue, line width=0.4mm] (5,7)--(3,7);
    \draw[blue, line width=0.4mm] (3,7)--(3,10);
    \draw[blue, line width=0.4mm] (3,10)--(1,10)--(1,11);

  \end{tikzpicture}
  \caption{Path corresponding to $\lambda=(7,5,5,5,4,4,2,2,2,0)$.}
  \label{fig:fig_1}
\end{figure}
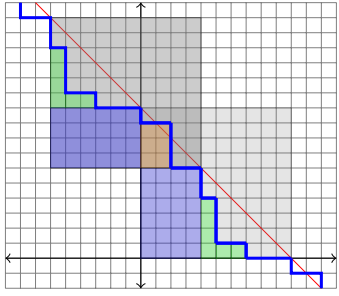

For $\lambda\in \mathrm{Par}_{\leq\delta_n}$, let $\p O_{\lambda}$ denote the $S_n$-orbit of $\lambda$.
Then we know that
\begin{align}\label{eqn:lattice_points_via_orbits}
	\mathrm{Lat}(P_{\delta_n})=\coprod_{\lambda\in \mathrm{Par}_{\leq \delta_n}} \p O_{\lambda}.
\end{align}
By Lemma~\ref{lem:unique_rep_dominated}, we know that each element in $\p O_{\lambda}$ indexes a unique equivalence class in $\widehat{\p C}_{n}$.
Since there are $n^{n-2}$ equivalence classes and this equals the cardinality of the left-hand side in \eqref{eqn:lattice_points_via_orbits}, we infer that the elements of $\p O_{\lambda}$ form a complete set of representatives as $\lambda$ runs over $\mathrm{Par}_{\leq \delta_n}$.
The preceding discussion in conjunction with Remark~\ref{rem:restriction} yields the following result.

\begin{theorem}\label{thm:frob_permutahedron}
	The representation $\gamma_n$  obtained by the $S_n$ action on $\mathrm{Lat}(P_{\delta_n})$ is isomorphic to the representation $\widehat{\tau}_n$ obtained by the $S_n$ action on $\widehat{\p C}_{n}$.
	Furthermore, the restriction of $\gamma_n$ to $S_{n-1}$ is $\mathrm{Park}_{n-1}$.
	The explicit $h$-expansion of $\mathrm{Frob}(\gamma_n)$ may be obtained as follows:
	Suppose $\mathrm{mult}(\lambda)$  denotes the partition recording the multiplicities of each part in $\lambda$ for $\lambda\in \mathrm{Par}_{\leq \delta_n}$ (recall we are allowing $0$ to be a part as well). Then
	\[
	\mathrm{Frob}(\gamma_n)=\mathrm{Frob}(\widehat{\tau}_n)=\sum_{\lambda\in \mathrm{Par}_{\leq\delta_n}}h_{\mathrm{mult}(w)}.
	\]
\end{theorem}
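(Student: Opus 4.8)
The plan is to assemble Theorem~\ref{thm:frob_permutahedron} from the pieces already in place, treating the three assertions (the isomorphism $\gamma_n\cong\widehat\tau_n$, the restriction statement, and the $h$-expansion) in turn. First I would record the decomposition \eqref{eqn:lattice_points_via_orbits} of $\mathrm{Lat}(P_{\delta_n})$ into $S_n$-orbits indexed by $\mathrm{Par}_{\leq\delta_n}$, which is immediate since the stabilizer of a lattice point is a Young subgroup and the partition in each orbit is its unique dominant representative. The key input is Lemma~\ref{lem:unique_rep_dominated}: it guarantees that for $\lambda\in\mathrm{Par}_{\leq\delta_n}$ the orbit $\p O_\lambda$ meets each $\shift$-equivalence class of $\p C_n$ at most once, so the union $\coprod_{\lambda\in\mathrm{Par}_{\leq\delta_n}}\p O_\lambda$ injects into $\widehat{\p C}_n$ as $S_n$-sets. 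A cardinality count then upgrades this injection to a bijection: the left-hand side has $n^{n-2}$ elements by \eqref{eqn:lattice_point_count} (via Postnikov's trimmed permutahedron, as explained in the text), and $|\widehat{\p C}_n|=n^{N-2}=n^{n-2}$ since $N=n$. An injective $S_n$-equivariant map between finite $S_n$-sets of equal size is an isomorphism of $S_n$-sets, hence $\gamma_n\cong\widehat\tau_n$ as permutation representations.

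Next, the restriction statement follows by combining this isomorphism with Remark~\ref{rem:restriction}, whose final sentence asserts precisely that $\widehat{\p C}_{1,n}=\widehat{\p C}_n$ restricts to $\pf_{n-1}$; transporting this along the isomorphism $\gamma_n\cong\widehat\tau_n$ gives $\mathrm{Res}^{S_n}_{S_{n-1}}(\gamma_n)=\pf_{n-1}$. (This is also where Theorem~\ref{thm:main} of the introduction is discharged.)

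For the $h$-expansion, I would use the orbit decomposition once more. Since $\gamma_n$ is a permutation representation whose orbits are indexed by $\lambda\in\mathrm{Par}_{\leq\delta_n}$, its Frobenius characteristic is the sum over orbits of $h_\mu$ where $\mu$ is the partition of $n$ whose parts are the block sizes of the Young subgroup stabilizing a point in that orbit. The stabilizer of the dominant lattice point $\lambda=(\lambda_1\geq\cdots\geq\lambda_n)$ is the Young subgroup $S_{\lambda}$ permuting the positions holding equal values, whose block sizes are exactly the multiplicities of the distinct parts of $\lambda$ (with $0$ counted as a part, consistent with the paper's convention). Thus that orbit contributes $h_{\mathrm{mult}(\lambda)}$, and summing gives $\mathrm{Frob}(\gamma_n)=\sum_{\lambda\in\mathrm{Par}_{\leq\delta_n}}h_{\mathrm{mult}(\lambda)}$; this equals $\mathrm{Frob}(\widehat\tau_n)$ by the isomorphism just established. (The theorem statement writes $h_{\mathrm{mult}(w)}$; I would write $h_{\mathrm{mult}(\lambda)}$ to match the summation variable.)

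The only real obstacle is the step that promotes the $S_n$-equivariant injection $\coprod_\lambda\p O_\lambda\hookrightarrow\widehat{\p C}_n$ to a bijection, and that obstacle has already been cleared: it rests entirely on Lemma~\ref{lem:unique_rep_dominated} together with the two independent counts $|\mathrm{Lat}(P_{\delta_n})|=n^{n-2}$ (Postnikov) and $|\widehat{\p C}_n|=n^{n-2}$. Everything else is a formal repackaging — the Frobenius-characteristic-of-a-permutation-module formula and the identification of point stabilizers as Young subgroups — so the proof is short once these facts are cited in the right order. One minor point worth stating explicitly: Lemma~\ref{lem:unique_rep_dominated} is phrased for a single dominant $\lambda$, but since $\shift$ commutes with the $S_n$-action and $\sort$ picks the dominant representative of each orbit, the conclusion that distinct orbits $\p O_\lambda$ land in distinct $\shift$-classes follows, because two dominant partitions lying in the same $\shift$-class would be related by some $\sort\circ\shift^j$ with $1\leq j\leq n-1$, contradicting the lemma.
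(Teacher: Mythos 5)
Your proposal is correct and follows essentially the same route as the paper: the orbit decomposition of $\mathrm{Lat}(P_{\delta_n})$ indexed by $\mathrm{Par}_{\leq\delta_n}$, Lemma~\ref{lem:unique_rep_dominated} to get an $S_n$-equivariant injection into $\widehat{\p C}_n$, the two cardinality counts $n^{n-2}$ to upgrade it to an isomorphism, Remark~\ref{rem:restriction} for the restriction statement, and the standard Young-subgroup-stabilizer formula for the $h$-expansion. Your closing remark correctly identifies (and repairs) the small deduction needed to pass from the single-$\lambda$ statement of the lemma to injectivity of the full union, and your note that $h_{\mathrm{mult}(w)}$ should read $h_{\mathrm{mult}(\lambda)}$ flags a genuine typo in the statement.
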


Taking \cite[Theorem 3.1]{KT20} into account, we get the following result.
\begin{corollary}\label{cor:lattice_points_fixed}
Let $\pi\in S_n$ have cycle type $\lambda=(\lambda_1,\dots,\lambda_{\ell})$ where $\lambda_{\ell}>0$.
Set $d\coloneqq \GCD(\lambda_1,\dots,\lambda_{\ell})$. Then the number of lattice  points in $\mathrm{Lat}(P_{\delta_n})$ fixed by the action of $\pi$ is given by $f(d)n^{\ell-2}$ where
\begin{align*}
f(d)=\left\lbrace\begin{array}{ll}
1 & d=1,\\
2 & d=2 \text{ and } n=2 \: (\md 4),\\
0 & \text{otherwise.}
\end{array}\right.
\end{align*}
\end{corollary}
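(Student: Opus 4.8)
The plan is to deduce the corollary from Theorem~\ref{thm:frob_permutahedron} together with the character formula of \cite[Theorem 3.1]{KT20}. Since $\gamma_n \cong \widehat{\tau}_n$ as $S_n$-representations, the number of lattice points in $\mathrm{Lat}(P_{\delta_n})$ fixed by $\pi$ is precisely the value of the permutation character of $\widehat{\tau}_n$ at $\pi$, i.e.\ the trace $\chi_{\widehat{\tau}_n}(\pi)$. So everything reduces to extracting this trace from the Frobenius characteristic computed in \cite{KT20}. Concretely, I would invoke \cite[Theorem 3.1]{KT20} in the specialization $m=1$ (so $N=n$) and simply read off the character value as a function of the cycle type $\lambda=(\lambda_1,\dots,\lambda_\ell)$ of $\pi$; the dependence on $n$ and $\lambda$ through $d=\GCD(\lambda_1,\dots,\lambda_\ell)$ is exactly what produces the three-case formula for $f(d)$.

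The key steps, in order, are: (i) record that $\chi_{\gamma_n}(\pi) = \chi_{\widehat{\tau}_n}(\pi)$ by Theorem~\ref{thm:frob_permutahedron}; (ii) substitute $m=1$ into the character formula of \cite[Theorem 3.1]{KT20} for $\widehat{\tau}_{m,n}$, being careful about the normalization implicit in passing from $\tau_{m,n}$ to $\widehat{\tau}_{m,n}$ (recall every $\sim$-class has exactly $n$ elements, so the shift group $\bZ/n\bZ$ acts freely and the character of $\widehat{\tau}_n$ is obtained from that of $\tau_n$ by the standard averaging over the $\shift$-action — one should verify this commutes cleanly with the $S_n$-action, which it does since $\shift$ is central); (iii) simplify the resulting arithmetic expression. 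When $d=1$ the formula collapses to $n^{\ell-2}$; when $d \geq 2$ one gets a nonzero contribution only if the extra gcd-condition coming from the weight constraint $\weight(w) \equiv -1 \pmod n$ can be satisfied, and a short parity/divisibility analysis — of the same flavor as the computation in the proof of Lemma~\ref{lem:w_lambda_primitive} around \eqref{eqn:no_solution} — singles out $d=2$ with $n \equiv 2 \pmod 4$, where the count is $2n^{\ell-2}$, and forces $0$ otherwise.

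The main obstacle is step (ii): correctly translating \cite[Theorem 3.1]{KT20}, whose statement is phrased for the family $\widehat{\PF}_{n,c}$ (or $\widehat{\p C}_{m,n}$ in the present notation) with a general parameter, into the present normalization and then reconciling the congruence $c_{m,n} \equiv -1 \pmod n$ (equivalently $\weight \equiv -1$) with the gcd hypothesis on $\lambda$. In particular one must check that the specific value $c = c_{1,n} = (n-2)(n-1)/2$ is the one that makes the $d=2$ exceptional case appear exactly when $n \equiv 2 \pmod 4$; this is the arithmetic heart of the matter and should be done by writing the fixed-point count as a sum over divisors and tracking when $\frac{n}{d}$ times an integer can hit the required residue modulo $n$, exactly as in \eqref{eqn:no_solution}. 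Everything else — the isomorphism $\gamma_n\cong\widehat\tau_n$, the passage from Frobenius characteristic to trace, and the cases $d=1$ — is routine.
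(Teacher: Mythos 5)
Your proposal matches the paper's own (very terse) proof: the corollary is obtained exactly by combining the isomorphism $\gamma_n\cong\widehat{\tau}_n$ from Theorem~\ref{thm:frob_permutahedron} with the character formula of \cite[Theorem 3.1]{KT20}, which is all the paper says. The additional arithmetic detail you sketch for the $d=2$, $n\equiv 2 \pmod 4$ case is consistent with what that cited formula yields, so the approach is essentially identical.
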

Note that Corollary~\ref{cor:connection_with_equivariant_volumes} now follows in view of \cite[Theorem 1.2]{ASV18}.
\noindent Is there a combinatorial proof of Corollary~\ref{cor:lattice_points_fixed} which eschews character-theoretic computations?

\medskip

We conclude with a couple of remarks.
\begin{remark}\emph{
In private communication with the authors, S.~Backman informed them that lattice points in the trimmed permutahedron $P_{\delta_n}$ may be interpreted as \emph{break divisors} on the complete graph $K_n$, and the latter are in bijection with the set of spanning trees of $K_n$.
Furthermore, in the divisor group of $K_n$, break divisors are in the same equivalence class as  \emph{$q$-reduced divisors}, which turn out be usual parking functions.
We refer the reader to \cite{BN07, ABKS14,Bac17} for more on these beautiful connections.
By appealing to these (non-trivial) results, one could have bypassed our elementary Lemma~\ref{lem:unique_rep_dominated} to arrive at Theorem~\ref{thm:frob_permutahedron}.}
\end{remark}

\begin{remark}\emph{
 One could ask for a  generalization of  Theorem~\ref{thm:frob_permutahedron} when $m>1$. The following example shows that a na\" ive generalization may not work.
Consider $m=2$ and $n=3$. Thus $c_{m,n}=1 \: (\md 3)$.
The orbits of $\p C_{m,n}$ are indexed by the partitions
\begin{align*}
\begin{array} {ccc}
100000 & 111100  & 211000 \\
211111 &  222211 & 221110\\
222220 &  220000 & 222100
\end{array}
\end{align*}
where the partitions in each column are obtained by applying $\sort\circ\shift^j$ to the partition in the top row. Any three partitions, one from each column, index $S_6$-orbits for the action on $\widehat{\p C}_{2,3}$.
It is clear that in this instance that there is no way to pick three such partitions, all of the same size.}
\end{remark}

\section*{Acknowledgements}
This material is based upon work supported by the Swedish Research
Council under
grant no. 2016-06596 while the authors were in residence at Institut
Mittag-Leffler in Djursholm, Sweden during Spring 2020.
We would like to thank the institute for its hospitality during our stay.
The third author benefitted immensely from discussions with Darij Grinberg, Philippe Nadeau and Marino Romero.
Thanks also to Brendon Rhoades, Spencer Backman  and Steven Rayan for very helpful email correspondence(s).

\bibliographystyle{alpha}
\bibliography{Biblio_PS}

\end{document}